\theoremstyle{plain}
\newtheorem{theorem}{\bf Theorem}[section]
\newtheorem{lemma}[theorem]{\bf Lemma}
\newtheorem{assumption}[theorem]{\bf Assumption}
\theoremstyle{definition}
\theoremstyle{remark}
\newtheorem*{remark*}{\bf Remark}
\newtheorem{remark}[theorem]{\bf Remark}
\let\saved@bibitem\@bibitem\makeatother
\let\@bibitem\saved@bibitem\makeatother
\title[Deep multi-step mixed algorithm for high-dimensional BSDEs]{Deep multi-step mixed algorithm for high dimensional non-linear PDEs and associated BSDEs}
\author{Daniel Bussell}
\address{}
\email{daniel.bussell.14@ucl.ac.uk}
\author{Camilo Andr\'es Garc\'ia Trillos}
\address{}
\email{camilo.garcia@ucl.ac.uk}
\begin{document}

\maketitle

\begin{abstract}
We propose a new multistep deep learning-based algorithm for the resolution of moderate to high dimensional nonlinear backward stochastic differential equations (BSDEs) and their corresponding parabolic partial differential equations (PDE). Our algorithm relies on the iterated time discretisation of the BSDE and approximates its solution and gradient using deep neural networks and automatic differentiation at each time step. The approximations are obtained by sequential minimisation of local quadratic loss functions at each time step through stochastic gradient descent. We provide an analysis of approximation error in the case of a network architecture with weight constraints requiring only low regularity conditions on the generator of the BSDE. The algorithm increases accuracy from its single step parent model and has reduced complexity when compared to similar models in the literature. 
    
\end{abstract}

\begin{section}{Introduction}
Let us consider a non-linear decoupled forward backward stochastic differential equation (BSDE) of the form
\begin{equation}\label{e:BSDE}
    \begin{cases}
    dX_{t}&=b(t,X_{t})dt+\sigma(t,X_{t})dW_{t},\,\ X_{0}=x_{0},\\
    -dY_{t}&=f(t,X_{t},Y_{t},Z_{t})dt-Z_{t}dW_{t},\\
    Y_{T}&=g(X_{T}),\\
    
    \end{cases}
\end{equation}
for $b\in\mathbb{R}^{d},\sigma\in\mathbb{M}^{d}$ where $\mathbb{M}^{d}$ is the set of $d\times d$ real valued matrices, a nonlinear generator function $f:[0,T]\times\mathbb{R}^{d}\times\mathbb{R}\times\mathbb{R}^{d}\to\mathbb{R}$, $W_{t}=(W_{t}^{1},\dots W_{t}^{d})$ is a $d$-dimensional Brownian motion and $g:\mathbb{R}^{d}\to\mathbb{R}$ is a terminal function. A solution to \eqref{e:BSDE} is a triple of adapted processes $(X,Y,Z)$ typically required to satisfy certain integrability conditions, such that \eqref{e:BSDE} holds. BSDEs are connected to semi-linear partial differential equations through the nonlinear Feynman-Kac formula, and indeed can be understood as stochastic analogues of such semi-linear partial differential equations. They have a wide range of applications in optimal control, economics and mathematical finance. To mention only a few examples, they can be used in European options pricing \cite{karoui1997}, American options pricing \cite{bouchard2012} and utility maximisation \cite{kobylanski2000}. 

Several existence and uniqueness results for solutions of \eqref{e:BSDE} have been obtained under different structural assumptions and integrability conditions (for example, the Lipschitz case on $\mathcal S^2 \times \mathcal H^2$ is studied in \cite{pardoux1990}). However, BSDEs do not usually have explicit analytically tractable solutions and this has motivated the wide study of numerical methods of approximation. Such methods have been extensively studied in \cite{gobet2009},\cite{bender2008},\cite{chassagneux2014},\cite{ma2008} and \cite{camilo2015} amongst many others. While performing well on low dimensions,  many of these methods suffer from a ``curse of dimensionality" in which their computational complexity increases exponentially with the dimension of the state variable. 

Recently, several works have aimed to avoid the curse of dimensionality by using neural networks methods to solve  \eqref{e:BSDE} in high-dimensional settings. They essentially rely on finding approximations to the \emph{decoupling field}, that is, a representation in terms of functions of the state variables. More specifically, the process of finding a solution to the BSDE is recasted as a reinforcement learning task, by  re-expressing a weaker form of equation \eqref{e:BSDE} as a loss function ruling the training of the introduced neural networks. Examples of works in this area include the Deep BSDE method in \cite{jentzen2017} or the Deep Dynamic Programming (DDP) methods presented in \cite{hure2019}.  We include a short reminder of these methods in Section \ref{s:ExistingLearningBSDE} below. 

In this paper we study a multi-step version of the DDP algorithm in \cite{hure2019}. We use a neural network to approximate the decoupling field for the `Y' variable and automatic differentiation to approximate the `Z' variable. Networks are then trained to minimise a quadratic loss function involving evaluations of the approximation at more than one time step. This loss function exploits an idea first studied in \cite{gobetturk2016} to modify the now classical one step scheme to decrease the overall complexity in the approximation. The scheme is presented in \eqref{MultistepLoss}.

Our main result, Theorem \ref{t:ErrorAnalysis},
 is a proof of the consistency and convergence of the scheme. It shows that passing from a one-step to a multi-step scheme reduces the demands on accuracy for the trained neural networks. Moreover, we are able to avoid high regularity conditions on the generators $f$ as the ones imposed in \cite{hure2019}.  
 
 Our work is related to \cite{ger2020} where an analogous approach was studied using  two networks, instead of one, per each time iteration to approximate independently the variables $Y,Z$. Mathematically, both our method and \cite{ger2020} achieve similar improvements in accuracy over their single-step pairs, but crucially we do not train independently another variable. 
 Although parts o the development are similar, we need the fact that $Z$ is not trained independently requires additional requirements in the analysis.

The rest of this paper is organized as follows:
Section 2 serves as an introduction to basic theory on the connection between BSDEs and PDEs and an introduction to neural networks. Section 3 introduces some existing schemes on solving PDEs using neural networks from the literature. Section 4 introduces the Deep Automatic Differentiation Multistep (DADM) Scheme. Section 6 provides an approximation error analysis of the DADM scheme. Section 6 illustrates numerical tests of the DADM scheme accuracy in comparison with existing methods.

\end{section}
\begin{section}{Preliminaries}
Let $(\Omega,\mathcal{F},\mathbb{P},\{\mathcal{F}_{t}\}_{0\leq t\leq T})$ be a filtered probability space. On this space we define a $d$-dimensional Brownian motion $W_{t}$ such that the filtration $\{\mathcal{F}_{t}\}_{0\leq t\leq T}$ is the natural filtration of $W_{t}$. We define $L^{2}=L^{2}_{\mathcal{F}}(0,T;\mathbb{R}^{d})$ the set of all $\mathcal{F}_{t}$-adapted and square integrable processes valued in $\mathbb{R}^{d}$. The triple $(X_{t},Y_{t},Z_{t}):[0,T]\times\Omega\to\mathbb{R}^{d}\times\mathbb{R}\times\mathbb{R}^{d}$ is said to be a solution of \eqref{e:BSDE} if it is $\mathcal{F}_{t}$ adapted, square integrable and satisfies \eqref{e:BSDE}.

When a solution of \eqref{e:BSDE} exists, under the current Markovian setting, we can use the flow of the diffusion $X$ to find a measurable function $u$ such that
\begin{equation}
u(t,X_t) := Y_t
\label{e:func_rep_Y}
\end{equation}

See for example \cite{pardoux1992}.
This representation of the solution of the BSDE as a function motivates our choice of neural networks as a function approximator.

\begin{subsection}{Nonlinear Feynman-Kac formula}
BSDEs can be used to provide a probabilistic interpretation for the solution of the semilinear parabolic PDE:
\begin{equation}\label{e:PDE}
 \frac{\partial u}{\partial t}+\sum_{i=1}^{d}b_{i}(t,x)\frac{\partial u}{\partial x_{i}}+\frac{1}{2}\sum_{i,j=1}^{d}(\sigma\sigma^{T})_{i,j}(t,x)\frac{\partial^{2}u}{\partial x_{i}\partial x_{j}}+f(t,x,u,(\nabla u)\sigma)=0   
\end{equation}
with terminal condition $u(T,x)=g(x)$. As shown in \cite{pardoux1992}, under global Lipschitz conditions on $b,\sigma,f,g$ uniformly in $t$ (for $f$), $u$ defined by \eqref{e:func_rep_Y} is the unique viscosity solution to the above PDE. Conversely, assuming that \eqref{e:PDE} has a classical solution $u\in \mathcal{C}^{1,2}([0,T]\times\mathbb{R}^{d})$, the solution of \eqref{e:BSDE} is given by 
\begin{equation}\label{e:PBSoln}
 Y_{t}=u(t,X_{t}),\,\ Z_{t}=(\nabla u(t,X_{t}))\sigma(t,X_{t})\,\ t\in[0,T],   
\end{equation}
$\mathbb{P}$ almost surely. 
\end{subsection}
\begin{subsection}{Approximation of functions by neural networks}
Deep neural networks are a class of functions designed to approximate unknown functions. They are constructed by the composition of simple maps and affine transformations, and they provide a way to deal with high-dimensional approximation problems in an efficient manner.
\\
We shall work with feed-forward neural networks. We fix the input dimension $d$, the output dimension $d_{1}$, the number $L+1\in \mathbb{N}$ of layers with $m_{l},l=0,\dots,L$ the number of neurons per layer. The first layer is the input layer with $m_{0}=d$,the final layer is the output layer with $m_{L}=d_{1}$ and the $L-1$ inner layers are called hidden layers with $m_{i}=m,i=1,\dots,L-1$.
\\
We then define a feed-forward neural network from $\mathbb{R}^{d}$ to $\mathbb{R}$ as the composition 
\begin{equation}\label{NNetDef}
   x\in\mathbb{R}^{d}\to \mathcal{A}_{L}\circ\rho\circ\mathcal{A}_{L-1}\circ\dots\circ\rho\circ\mathcal{A}_{1}(x)\in\mathbb{R}^{d_{1}}, 
\end{equation}
where $\mathcal{A}_{i},i=1,\dots,L$ are a sequence of linear transformations represented by 
\begin{equation*}
    \mathcal{A}_{i}(x)=\mathcal{W}_{i}x+\beta_{i},
\end{equation*}
for a weight matrix $\mathcal{W}_{i}$, bias vector $\beta_{i}$ and nonlinear activation function $\rho:\mathbb{R}\to\mathbb{R}$ which is applied component-wisely to the outputs of $\mathcal{A}(x)$. By abuse of notation we write $\rho(x)=(\rho(x_{1}),\dots,\rho(x_{d})$.
\\
The matrices $\mathcal{W}_{i}$ and vectors $\beta_{i}$ are the parameters of the neural network and can be identified with an element $\theta\in\mathbb{R}^{N_{m}}$, where $N_{m}=\sum_{i=0}^{L-1}m_{i}(1+m_{i+1})=d(1+m)+m(1+m)(L-2)+m(1+d_{1})$ is the number of parameters. We denote by $\Phi_{m}(.;\theta)$ the neural network function defined in \eqref{NNetDef} and by $\mathcal{NN}^{\rho}_{d,d_{1},L,m}(\mathbb{R}^{N_{m}})$ the set of all such neural networks $\Phi_{m}(.;\theta)$ for $\theta\in\mathbb{R}^{N_{m}}$, and set
\begin{equation*}
    \mathcal{NN}^{\rho}_{d,d_{1},L}=\bigcup_{m}\mathcal{NN}^{\rho}_{d,d_{1},L,m}(\mathbb{R}^{N_{m}})
\end{equation*}
as the class of all neural networks with architecture given by $d,d_{1},L$ and $\rho$.
\\

The following theorem from \cite{hornik1989} justifies our choice of neural networks as function approximators
\begin{theorem}[Universal Approximation Theorem:]
$\mathcal{NN}^{\rho}_{d,d_{1},L}$ is dense in $L^{2}(\nu)$ for any finite measure $\nu$ on $\mathbb{R}^{d}$, whenever $\rho$ is continuous and non-constant
\end{theorem}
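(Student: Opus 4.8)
\emph{Proof proposal.} The plan is to establish the standard, stronger form of the statement — that single-hidden-layer networks are dense in $C(K)$ for the uniform norm on every compact $K\subset\mathbb{R}^d$ — and then to deduce the $L^2(\nu)$ claim by soft arguments. First I would make three reductions. Since $\mathbb{R}^{d_1}$-valued networks act componentwise, take $d_1=1$. Since $\nu$ is a finite Borel measure on $\mathbb{R}^d$ it is tight and $C_c(\mathbb{R}^d)$ is dense in $L^2(\nu)$, so it is enough to approximate, uniformly on a ball $K$ carrying all but $\varepsilon$ of the mass of $\nu$, the restriction to $K$ of an arbitrary continuous compactly supported function. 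Finally, the passage from one hidden layer to the general architecture with $L-1\ge 1$ hidden layers is a bootstrap: to reproduce a one-hidden-layer map $x\mapsto\sum_j\gamma_j\rho(w_j\cdot x+b_j)$ with a deeper net, note that the vector of first-layer features $v=(\rho(w_1\cdot x+b_1),\dots)$ ranges over a compact set and $v\mapsto\sum_j\gamma_j v_j$ is linear, hence continuous on that set; approximating it by $\sum_k d_k\rho(a_k\cdot v+c_k)$ is again an instance of one-hidden-layer approximation, now in the feature dimension, and iterating handles any depth. So everything comes down to showing that $\mathcal V:=\mathrm{span}\{x\mapsto\rho(w\cdot x+b):w\in\mathbb{R}^d,\ b\in\mathbb{R}\}$ is dense in $C(K)$.

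For this I would use the Hahn--Banach / Riesz criterion: $\mathcal V$ is dense in $C(K)$ iff the only finite signed Borel measure $\mu$ on $K$ with $\int_K\rho(w\cdot x+b)\,d\mu(x)=0$ for all $w,b$ is $\mu=0$ — i.e. iff $\rho$ is \emph{discriminatory}. To prove this, in the squashing setting of \cite{hornik1989} (where $\rho$ is bounded with limits at $\pm\infty$), fix $a\in\mathbb{R}^d$ and $\theta,\phi\in\mathbb{R}$ and let $\lambda\to\infty$ in $\rho(\lambda(a\cdot x+\theta)+\phi)$: this converges boundedly to $\mathbf{1}_{\{a\cdot x+\theta>0\}}+\rho(\phi)\,\mathbf{1}_{\{a\cdot x+\theta=0\}}$, so by dominated convergence $\mu$ annihilates every half-space indicator, hence every slab $\{u<a\cdot x\le v\}$, and then — approximating $t\mapsto e^{it}$ uniformly on bounded intervals by step functions — $\int_K e^{i\,a\cdot x}\,d\mu(x)=0$ for all $a$; uniqueness of the Fourier transform of finite measures gives $\mu=0$. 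For a merely continuous non-constant $\rho$ one additionally invokes that the closed span of translates and dilates of $\rho$ is rich enough (the degenerate polynomial case aside), reducing to the previous argument; alternatively one runs the Stone--Weierstrass variant, first showing that $\{x\mapsto\cos(w\cdot x+b)\}$ has dense span in $C(K)$ via the sum-to-product identity (so its span is an algebra separating points and containing constants) and then approximating $\cos$ uniformly on $K$ by elements of $\mathcal V$.

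I expect the discriminatory step to be the only genuine obstacle: converting the vanishing of all the integrals $\int\rho(w\cdot x+b)\,d\mu$ into the vanishing of $\widehat\mu$. This is where the hypotheses on $\rho$ really bite — a non-constant \emph{polynomial} activation would make the statement fail, so it must be read with the boundedness/squashing assumption of \cite{hornik1989} in force — whereas the reduction to one hidden layer, the bootstrap to arbitrary depth, and the passage from $C(K)$-density to $L^2(\nu)$-density are all routine soft analysis.
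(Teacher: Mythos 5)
The paper does not prove this statement; it is imported verbatim from \cite{hornik1989}, so there is no in-paper argument to compare yours against. Your sketch is essentially the standard Cybenko--Hornik proof, and its architecture (reduce to scalar output; bootstrap from one hidden layer to depth $L$ through the compact feature set; reduce density to the discriminatory property via Hahn--Banach and Riesz representation; kill the annihilating measure by sending the ridge direction to infinity and invoking injectivity of the Fourier transform) is sound. Two remarks. First, your reduction from $L^2(\nu)$ to uniform approximation on a compact $K$ with $\nu(K^c)<\varepsilon$ has a small circularity: the $L^2(\nu)$ error also sees the network on $K^c$, and although a squashing $\rho$ makes each network bounded, the bound $\sum_j|\gamma_j|\,\|\rho\|_\infty$ depends on the network, which is chosen \emph{after} $K$; so $\int_{K^c}|g|^2\,d\nu$ is not automatically small. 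The clean fix --- the one the cited literature actually uses for $L^p(\mu)$ density --- is to run the duality argument directly in $L^2(\nu)$: non-density would give $h\in L^2(\nu)$, $h\neq 0$, with $\int\rho(w\cdot x+b)\,h(x)\,d\nu(x)=0$ for all $(w,b)$; since $\nu$ is finite, $d\mu:=h\,d\nu$ is a finite signed Borel measure on all of $\mathbb{R}^d$ by Cauchy--Schwarz, and your discriminatory argument applied to $\mu$ on $\mathbb{R}^d$ yields $\mu=0$, hence $h=0$ $\nu$-a.e., with no compact set needed. Second, your observation that a non-constant polynomial activation defeats the statement is correct and worth recording: as printed, the hypotheses ``continuous and non-constant'' are too weak, and the theorem must be read with the boundedness/squashing assumption of \cite{hornik1989} in force (or with the later non-polynomial refinement); boundedness of $\rho$, or a moment condition on $\nu$, is also needed merely for the networks to belong to $L^2(\nu)$.
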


For the purpose of convergence analysis, we introduce the following class of networks with one hidden layer,\,\ $C^{4}$ activation function $\rho$ with linear growth condition and bounded derivatives:
\begin{equation}\label{e:WeightDerivNet}
\mathcal{NN}^{\rho}_{d,1,2,m}(\Theta^{\gamma}_{m}):=\left\{x\in\mathbb{R}^{d}\to\mathcal{U}(x;\theta)=\sum_{i=1}^{m}c_{i}\rho(a_{i}x+b_{i})+b_{0},\theta=(a_{i},b_{i},c_{i},b_{0})_{i=1}^{m}\in\Theta_{m}^{\gamma}\right\}    
\end{equation}
where
\begin{equation*}
\Theta_{m}^{\gamma}=\left\{\theta=(a_{i},b_{i},c_{i},b_{0})_{i=1}^{m} | \max_{i=1,\dots,m}|a_{i}|\leq \gamma_{m},\,\ \sum_{i=1}^{m}|c_{i}|\leq\gamma_{m}\right\}
\end{equation*}
for a sequence $(\gamma_{m})_{m}$ converging to $\infty$, as $m\to\infty$, and such that 
\begin{equation}\label{e:GammaSeq}
  \frac{\gamma_{m}^{6}}{N}\to 0,  
\end{equation}
as $m,N\to\infty.$
We present the following lemma on the bounds of the network derivatives:
\begin{lemma}{(Bounds on network derivatives)}\label{NetworkBoundsLemma}
Suppose $\mathcal{U}\in\mathcal{NN}_{d,1,2,m}^{\rho}(\Theta_{m}^{\gamma})$, then there exists a $C>0$, depending only on $d$ and the derivatives of $\rho$, such that
\begin{equation}\label{e:DerivBoundsU}
\begin{cases}
\sup_{x\in\mathbb{R}^{d},\theta\in\Theta_{m}^{\gamma}}&\left|D_{x}\mathcal{U}(x;\theta)\right|\leq C\gamma_{m}^{2},\\
\sup_{x\in\mathbb{R}^{d},\theta\in\Theta_{m}^{\gamma}}&\left|D^{2}_{x}\mathcal{U}(x;\theta)\right|\leq C\gamma_{m}^{3},\\
\sup_{x\in\mathbb{R}^{d},\theta\in\Theta_{m}^{\gamma}}&\left|D^{3}_{x}\mathcal{U}(x;\theta)\right|\leq C\gamma_{m}^{4},\\
\sup_{x\in\mathbb{R}^{d},\theta\in\Theta_{m}^{\gamma}}&\left|D^{4}_{x}\mathcal{U}(x;\theta)\right|\leq C\gamma_{m}^{5}.
\end{cases}   
\end{equation}
\end{lemma}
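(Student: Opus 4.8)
The plan is to deduce all four estimates at once from a single chain-rule computation, exploiting that the inner maps $x\mapsto a_i x+b_i$ are affine. Since $\mathcal{U}(\cdot;\theta)$ is a finite sum of terms $c_i\rho(a_i x+b_i)$ and all higher derivatives of an affine map vanish, the $k$-th order spatial derivative of each summand is simply $c_i\,\rho^{(k)}(a_i x+b_i)\,a_i^{\otimes k}$, where $a_i^{\otimes k}$ is the $k$-fold outer product of the row vector $a_i$ with itself. Hence
\[
D_x^k\mathcal{U}(x;\theta)=\sum_{i=1}^{m}c_i\,\rho^{(k)}(a_i x+b_i)\,a_i^{\otimes k},\qquad k=1,2,3,4,
\]
with no cross terms involving products of derivatives of $\rho$ of different orders; this is exactly why the analysis restricts to a single hidden layer here, and the identity should be recorded first (with a brief justification by induction on $k$).

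Next I would invoke the standing hypotheses: $\rho\in C^4$ with bounded derivatives, so $M_k:=\sup_{t\in\mathbb{R}}|\rho^{(k)}(t)|<\infty$ for $k=1,2,3,4$. For the tensorial factor one uses the elementary bound $|a_i^{\otimes k}|\le c_{d}\,|a_i|^{k}$, where $c_d$ is the constant (depending only on $d$) coming from the equivalence between whichever norm is fixed on the space of $k$-tensors over $\mathbb{R}^d$ and the $k$-th power of the vector norm on $\mathbb{R}^d$ — fixing this norm convention once and for all is the only genuine point of care. Combining the triangle inequality with the two constraints defining $\Theta_m^\gamma$, namely $\max_i|a_i|\le\gamma_m$ and $\sum_i|c_i|\le\gamma_m$, gives, uniformly in $x\in\mathbb{R}^d$ and $\theta\in\Theta_m^\gamma$,
\[
\bigl|D_x^k\mathcal{U}(x;\theta)\bigr|\le c_d\sum_{i=1}^{m}|c_i|\,M_k\,|a_i|^{k}\le c_d\,M_k\Bigl(\max_i|a_i|\Bigr)^{k}\sum_{i=1}^{m}|c_i|\le c_d\,M_k\,\gamma_m^{k+1}.
\]
Setting $C:=c_d\max_{1\le k\le 4}M_k$ then yields the four displayed inequalities of \eqref{e:DerivBoundsU} for $k=1,2,3,4$ respectively, with $C$ depending only on $d$ and on $\|\rho'\|_\infty,\dots,\|\rho^{(4)}\|_\infty$, as claimed.

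There is no substantial obstacle: the argument is essentially bookkeeping. The two things worth stating carefully are (i) the precise combinatorial/tensor form of $D_x^k\mathcal{U}$, which is clean only because the pre-activations are affine in $x$, and (ii) the choice of norms on the higher-order derivative objects, which is what introduces (and confines) the dependence of $C$ on the dimension $d$. Everything else follows from boundedness of the derivatives of $\rho$ together with the weight constraints built into $\Theta_m^\gamma$.
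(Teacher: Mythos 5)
Your proposal is correct and follows essentially the same route as the paper: differentiate the single-hidden-layer sum term by term, use boundedness of $\rho^{(k)}$, and combine $\max_i|a_i|\le\gamma_m$ with $\sum_i|c_i|\le\gamma_m$ to get the $\gamma_m^{k+1}$ bound. The paper only writes out the case $k=1$ and asserts the higher orders by repetition, so your explicit tensor formula $D_x^k\mathcal{U}=\sum_i c_i\rho^{(k)}(a_ix+b_i)a_i^{\otimes k}$ and the remark on the choice of tensor norm are a slightly more careful version of the same argument, not a different one.
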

\begin{proof}
For $\mathcal{U}\in\mathcal{NN}_{d,1,2,m}^{\rho}(\Theta_{m}^{\gamma})$ we differentiate \eqref{e:WeightDerivNet} to obtain
\begin{equation}
D_{x}\mathcal{U}(x;\theta)=\sum_{i=1}^{m}a_{i}c_{i}D_{x}\rho(a_{i}x+b_{i})    
\end{equation}
for $x\in\mathbb{R}^{d},\theta\in\Theta_{m}^{\gamma}$. We then take absolute value and obtain
\begin{equation*}
\begin{aligned}
|D_{x}\mathcal{U}(x;\theta)|&=\left|\sum_{i=1}^{m}a_{i}c_{i}D_{x}\rho(a_{i}x+b_{i})\right|\\
&\leq C\max_{i=1,\dots,m}|a_{i}|\sum_{i=1}^{m}|c_{i}|\leq C\gamma_{m}^{2},
\end{aligned}
\end{equation*}
where we have used the fact that the derivative of $\rho$ is bounded and the definition of the set of weights $\Theta_{m}^{\gamma}$. The remaining bounds on the higher derivatives are proved by repeated application of the above argument.
\end{proof}

\end{subsection}
\begin{subsection}{Time discretization of BSDEs}
We introduce the integral form of the forward diffusion process $X$ in \eqref{e:BSDE} as
\begin{equation}\label{e:ForwardSDE}
X_{t}=X_{0}+\int_{0}^{t}b(s,X_{s})ds+\int_{0}^{t}\sigma(s,X_{s})dW_{s},\,\ 0\leq t\leq T.    
\end{equation}
We consider the time discretization
\begin{equation*}
    \pi=\left\{t_{i}|t_{i}\in[0,T],i=0,1,\dots,n,t_{i}<t_{i+1},\Delta t_{i}=t_{i+1}-t_{i},t_{0}=0,t_{n}=T\right\}
\end{equation*}
of the time interval $[0,T]$ with mesh $|\pi|:=\sup_{i}\Delta t_{i}$ such that $h=O(\frac{1}{n})$. We introduce the Euler-Maruyama scheme $(X^{\pi}_i)_{i=0,\dots,n}$ defined by 
\begin{equation*}
X_{i+1}^{\pi}=X_{0}+b(t_{i},X^{\pi}_{i})\Delta t_{i}+\sigma(t_{i},X^{\pi}_{i})\Delta W_{i}    
\end{equation*}
where $\Delta W_{i}=W_{t_{i+1}}-W_{t_{i}},i=0,\dots,n$.
\\
We write the time discretization of the BSDE \eqref{e:BSDE} in backward form as
\begin{equation}\label{e:BackwardOneStep}
Y_{i}^{\pi}=Y_{i+1}^{\pi}+f(t_{i},X^{\pi}_{i},Y_{i}^{\pi},Z_{i}^{\pi})-Z_{i}^{\pi}\Delta W_{i},\,\ i=0,\dots,n-1
\end{equation}
which we may write as 
\begin{equation}\label{e:BackwardExpectation}
\begin{cases}
Y_{i}^{\pi}&=\mathbb{E}_{i}\left[Y_{i+1}^{\pi}+f(t_{i},X_{i}^{\pi},Y_{i}^{\pi},Z_{i}^{\pi})\Delta t_{i}\right]\\
Z_{i}^{\pi}&=\mathbb{E}_{i}\left[\frac{\Delta W_{i}}{\Delta t_{i}}Y_{i+1}^{\pi}\right]
\end{cases}    
\end{equation}
where $\mathbb{E}_{i}$ is the conditional expectation operator with respect to the filtration $\mathcal{F}_{t_{i}}$.
\end{subsection}
\end{section}
\begin{section}{Some existing deep learning schemes}
\label{s:ExistingLearningBSDE}
In this section we present a review of some existing deep learning schemes for BSDEs.
\begin{subsection}{Deep BSDE Scheme}
This scheme, from \cite{jentzen2017}, is one of the first works applying deep learning methods for the resolution of BSDEs. It is based on a stochastic optimization interpretation of the BSDE: the variable $Y$ is treated as a controlled stochastic process with its initial value $Y_0$ and the process $Z$ being the controls in closed-loop form. More specifically, we look to minimise the \emph{global} square error loss function 
\begin{equation*}
L(\mathcal{U}_{0},\boldsymbol{\mathcal{Z}})=\mathbb{E}|Y^{\mathcal{U}_{0},\boldsymbol{\mathcal{Z}}}_{N}-g(X^{\pi}_{N})|^{2},  
\end{equation*}
over neural network functions $\mathcal{U}_{0}:\mathbb{R}^{d}\to\mathbb{R}$ and sequences of neural network functions $\boldsymbol{\mathcal{Z}}=(\mathcal{Z}_{i})_{i},\,\ \mathcal{Z}_{i}:\mathbb{R}^{d}\to\mathbb{R}^{d},\,\ i=0,\dots,N-1$, where 
\begin{equation*}
Y^{\mathcal{U}_{0},\boldsymbol{\mathcal{Z}}}_{i+1}=Y^{\mathcal{U}_{0},\boldsymbol{\mathcal{Z}}}_{i}+f(t_{i},X^{\pi}_{i},Y^{\mathcal{U}_{0},\boldsymbol{\mathcal{Z}}}_{i},\mathcal{Z}_{i}(X^{\pi}_{i}))\Delta t_{i}+\mathcal{Z}_{i}(X^{\pi}_{i})\Delta W_{i},\,\ i=0,\dots,N-1    
\end{equation*}
where $Y^{\mathcal{U}_{0},\boldsymbol{\mathcal{Z}}}_{0}=\mathcal{U}_{0}(X^{\pi}_{0})$.
The output of the scheme, $\Hat{\mathcal{U}}_{0}$, is an approximation of the solution $Y_{0}$ to the BSDE at time $0$.
\end{subsection}
\begin{subsection}{Deep Backward Dynamic Programming(DBDP)\cite{hure2019}}
This scheme uses the backward discretisation of the BSDE \eqref{e:BackwardOneStep} and learns the pair $(Y_{t_{i}},Z_{t_{i}})$ at each time step using neural networks trained with the forward process $X^{\pi}_{i}$. There are two versions of the scheme that differ in their approximation for the gradient function $Z$:
\begin{enumerate}
    \item DBDP1: The scheme is initialised with $\Hat{\mathcal{U}}_{N}(X^{\pi}_{N})=g(X^{\pi}_{N})$ and proceeds by backward induction for $i=N-1,\dots,0$ by minimising local square error loss functions 
    \begin{equation*}
        \begin{split}
               L_{i}  (\mathcal{U}_{i},\mathcal{Z}_{i})  = \mathbb{E}\Big| & \Hat{\mathcal{U}}_{i+1}(X^{\pi}_{i+1})-\mathcal{U}_{i}(X_{i}^{\pi}) \\
               & -f(t_{i},X_{i}^{\pi},\mathcal{U}_{i}(X_{i}^{\pi}),\mathcal{Z}_{i}(X_{i}^{\pi}))\Delta t_{i}-\mathcal{Z}_{i}(X_{i}^{\pi})\Delta W_{i}\Big|^{2}
        \end{split}
    \end{equation*}
    over the neural network functions $\mathcal{U}_{i}:\mathbb{R}^{d}\to\mathbb{R}$ and $\mathcal{Z}_{i}:\mathbb{R}^{d}\to\mathbb{R}^{d}$. We then set $(\Hat{\mathcal{U}}_{i},\Hat{\mathcal{Z}}_{i})$ as the solution to each local minimisation problem.
    \item DBDP2: The scheme is initialised at $\Hat{\mathcal{U}}_{N}(X_{i}^{\pi})=g(X_{i}^{\pi})$ and proceeds by backward induction for $i=N-1,\dots,0$ by minimising the local square error loss functions 
    \begin{align*}
     L_{i}(\mathcal{U}_{i})=\mathbb{E} \Big|&\Hat{\mathcal{U}}_{i+1}(X^{\pi}_{i+1})-\mathcal{U}_{i}(X_{i}^{\pi})
     -D_{x}\mathcal{U}_{i}(X_{i}^{\pi})^{T}\sigma(t_{i},X_{i}^{\pi})\Delta W_{i}\\
     & \quad  -f(t_{i},X_{i}^{\pi},\mathcal{U}_{i}(X_{i}^{\pi}),\sigma(t_{i},X_{i}^{\pi})^{T}D_{x}\mathcal{U}_{i}(X_{i}^{\pi}))\Delta t_{i} \Big|^{2}   
    \end{align*}
    where $D_{x}\mathcal{U}_{i}$ is the automatic differentiation of the neural network function $\mathcal{U}_{i}$.
    We then set $\hat{\mathcal{U}}_{i}$ as the solution to the local minimisation problem and set $\Hat{\mathcal{Z}}_{i}=\sigma^{T}(t_{i},\cdot)D_{x}\mathcal{U}_{i}$.
    
\end{enumerate}

\end{subsection}
\end{section}
\begin{section}{Deep Automatic Differentiation Multistep Scheme}
To motivate the scheme, we begin by iterating the backward discretisation for the BSDE \eqref{e:BackwardOneStep} and insert the terminal condition $Y^{\pi}_{N}=g(X^{\pi}_{N})$ in order to obtain the following iterated representation 
\begin{equation}\label{e:IteratedBackward}
Y_{i}^{\pi}=g(X_{N}^{\pi})+\sum_{j=i}^{N-1}[f(t_{j},X^{\pi}_{j},Y_{j}^{\pi},Z_{j}^{\pi})\Delta t_{j}-Z_{j}^{\pi}\Delta W_{j}].    
\end{equation}
We will now define a loss function for the training of neural networks based on a weak form of the above scheme: for $i=N-1,\dots,0$, we minimise the local square error loss functions
\begin{equation}\label{MultistepLoss}
\begin{aligned}
L_{i}(\mathcal{U}_{i})=\mathbb{E}\Big|&g(X^{\pi}_{N})+\sum_{j=i+1}^{N-1}f(t_{j},X^{\pi}_{j},\Hat{\mathcal{U}}_{j}(X^{\pi}_{j}),\sigma^{T}(t_{j},X^{\pi}_{j})D_{x}\Hat{\mathcal{U}}_{j+1}(X^{\pi}_{j}))\Delta t_{j}\\
&-\mathcal{U}_{i}(X_{i}^{\pi})-f(t_{i},X_{i}^{\pi},\mathcal{U}_{i}(X_{i}^{\pi}),\sigma(t_{i},X_{i}^{\pi})^{T}D_{x}\mathcal{U}_{i+1}(X_{i}^{\pi}))\Delta t_{i}\\
&-\sum_{j=i+1}^{N-1}\Hat{\mathcal{Z}}_{j}(X^{\pi}_{j})\Delta W_{j}-\sigma(t_{i},X_{i}^{\pi})^{T}D_{x}\mathcal{U}_{i+1}(X_{i}^{\pi})\Delta W_{i}\Big|^{2}   
\end{aligned}
\end{equation}
and we update $\Hat{\mathcal{U}}_{i}$ as the solution to the local minimisation problem and we set $\Hat{\mathcal{Z}}_{i}=\sigma(t_{i},\cdot)^{T}D_{x}\Hat{\mathcal{U}}_{i+1}$. Multistep schemes for the resolution of BSDEs were introduced in \cite{gobetturk2016} and \cite{chassagneux2014} and thus our scheme can be viewed as a deep learning analogue of these schemes. Here, the loss function forces the solution to satisfy the scheme.
\\
\begin{remark}
In \cite{ger2020} the authors introduce a Multistep Deep Backward Algorithm that can be viewed as a Multistep version of DBDP1 from \cite{hure2019}. In this version,  a pair of neural networks $(\mathcal{U}_{i},\mathcal{Z}_{i})$ are used to estimate the solution $(Y_{i},Z_{i})$. Hence, such a scheme requires the computation of two neural network functions $(\Hat{\mathcal{U}}_{i},\Hat{\mathcal{Z}}_{i})$ at each time step, which can be computationally expensive.  In contrast, the method studied in our paper involves automatic differentiation of $\Hat{\mathcal{U}}_{i}$, requiring only the training of the neural networks $\Hat{\mathcal{U}}_{i}$, with potential reductions in computational load.
\end{remark}
In practice, the expectation defining the loss \eqref{MultistepLoss} is replaced by the empirical average over some number of samples $M$. The minimisation of this average is obtained via stochastic gradient descent (SGD) or some of its variants such as Adam. That is, given a timestep $i=N-1,\dots,0$ and sample $(X^{\pi,k}_{j},\Delta W_{j}^{k})_{j=i,\dots N}$ of the forward Euler scheme and Brownian increment for $k=1,\dots,M$, of mini-batch size $M$ the iterations of SGD for timestep $i$ serves to minimise the empirical loss:
\begin{equation}\label{EmpiricalLoss}
\begin{aligned}
 \mathbb{L}_{i}(\theta)=\frac{1}{M}\sum_{k=1}^{M} \Big|&g(X^{\pi,k}_{n})+\sum_{j=i+1}^{n-1}\Hat{\mathcal{Z}}_{j}(X^{\pi,k}_{j})\Delta W_{j}^{k}\\
&+ \sum_{j=i+1}^{n-1}f(t_{j},X^{\pi,k}_{j},\Hat{\mathcal{U}}_{j}(X^{\pi,k}_{j}),\sigma^{T}(t_{j},X^{\pi,k}_{j})D_{x}\Hat{\mathcal{U}}_{j+1}(X^{\pi,k}_{j}))\Delta t_{j}\\
&-\mathcal{U}_{i}(X_{i}^{\pi,k})-f(t_{i},X_{i}^{\pi,k},\mathcal{U}_{i}(X_{i}^{\pi,k}),\sigma(t_{i},X_{i}^{\pi,k})^{T}D_{x}\mathcal{U}_{i+1}(X_{i}^{\pi,k}))\Delta t_{i}\\
&-\sigma(t_{i},X_{i}^{\pi,k})^{T}D_{x}\mathcal{U}_{i+1}(X_{i}^{\pi,k})\Delta W_{i}^{k}\Big|^{2}
\end{aligned}
\end{equation}
where $\hat{\mathcal{U}_{j}}=\mathcal{U}_{j}^{\hat{\theta}_{j}},\hat{\mathcal{Z}}_{j}^{\hat{\theta_{j}}}=\sigma^{T}(t_{j},\cdot)D_{x}\mathcal{U}_{j+1}^{\hat{\theta_{j}}}$, and $\hat{\theta}_{j}$ is the optimal parameter obtained from SGD at future times $j=i+1,\dots,N-1$. In order to reduce computational time, we initialise the parameter of SGD at time $i$ to be the optimal parameter resulting from the SGD at time $i+1$.

\begin{remark}
To ease the notation, we will not include explicitly in our analysis the errors induced by SGD learning and approximating the minimiser with an empirical loss \eqref{EmpiricalLoss}.   These error sources are studied  in \cite{beck2022} and \cite{chizat2018global}. 
\end{remark}

\end{section}

\begin{section}{Error Analysis}
We now begin a theoretical analysis of the approximation error. The methods of proof are inspired by \cite{hure2019} and \cite{ger2020}.

\begin{assumption}\label{LipschitzAssump}
(i) $X_{0}\in L^{3}(\mathcal{F}_{0},\mathbb{R}^{d})$
\\
(ii) The functions $\mu$ and $\sigma$ are uniformly Lipschitz in $t$, Lipschitz in $x$ and continuously differentiable in $x$.
\\
(iii) The generator $f$ is Lipschitz continuous in $x,y,z$ and $1/2$-Hölder continuous in time, that is: $\exists K>0$ such that for all $(t,x,y,z)$ and $(t',x',y',z')\in [0,T]\times\mathbb{R}^{d}\times\mathbb{R}\times\mathbb{R}^{d},$
\begin{equation*}
|f(t,x,y,z)-f(t',x',y',z')|\leq K(|t-t'|^{1/2}+|x-x'|_{2}+|y-y'|+|z-z'|_{2}).    
\end{equation*}
\\
(iv) $\sup_{t\in[0,T]}|f(t,0,0,0)|<\infty$.
\\
(v) The function $g$ satisfies a linear growth condition.
\end{assumption}
This assumption ensures the existence and uniqueness of an adapted solution $(X,Y,Z)$ to \eqref{e:BSDE}, satisfying 
\begin{equation*}
\mathbb{E}\left[\sup_{0\leq t\leq T}|X_{t}|^{2}+\sup_{0\leq t\leq T}|Y_{t}|^{2}+\int_{0}^{T}|Z_{t}|^{2}\mathrm{d}t\right]< \infty.
\end{equation*}
For the time grid $\pi=\left\{t_{i} | i=0,\dots,N\right\}$, we can introduce the $L^{2}$-regularity of $Z$:
\begin{equation}\label{e:ZRegularity}
\epsilon^{Z}(\pi):=\mathbb{E}\left[\sum_{i=0}^{N-1}\int_{t_{i}}^{t_{i+1}}|Z_{t}-\Bar{Z}_{t_{i}}|_{2}^{2}dt\right],\,\ \text{with} \,\ \Bar{Z}_{t_{i}}:=\frac{1}{\Delta t_{i}}\mathbb{E}_{i}\left[\int_{t_{i}}^{t_{i+1}}Z_{t}dt\right].
\end{equation}

\begin{subsection}{Convergence of the DADM Scheme}
We choose the class of neural network functions $\mathcal{NN}^{\rho}_{d,1,2,m}(\Theta^{\gamma}_{m})$ for the approximation of the BSDE solution $Y_{i}$ and define $(\hat{\mathcal{U}}_{i},\hat{\mathcal{Z}}_{i})$ as the output of the DADM scheme at times $t_{i},i=0,\dots,N.$
\\
We shall define, for $ i=0,\dots,N-1$, 
\begin{equation}
\begin{cases}
V_{i} & =\mathbb{E}_{i}\big[g(X_{N})+f(t_{i},X_{i},V_{i},\Bar{\hat{Z_{i}}})\Delta t_{i}\\
& \qquad \qquad + \sum_{j=i+1}^{N-1}f(t_{j},X_{j},\hat{\mathcal{U}}_{j}(X_{j}),\sigma^{T}(t_{j},X_{j})D_{x}\hat{\mathcal{U}}_{j+1}(X_{j}))\Delta t_{j}\big],\\
\hat{Z}_{i}&=\mathbb{E}_{i}\left[\frac{g(X_{N})\Delta W_{i}}{\Delta t_{i}}+\sum_{j=i+1}^{N-1}f(t_{j},X_{j},\hat{\mathcal{U}}_{j}(X_{j}),\sigma^{T}(t_{j},X_{j})D_{x}\hat{\mathcal{U}}_{j+1}(X_{j}))\frac{\Delta W_{i}\Delta t_{j}}{\Delta t_{i}}\right],\
\end{cases} 
\end{equation}
i.e., an intermediate process that can be best understood as using neural networks to define the process up to step $i+1$ and then a one step of implicit Euler backward scheme.  

We note that by the Markov property of the discretised forward process $X_{i}$, that, for $i=0,\dots,N$ there exist some deterministic functions $v_{i},\hat{z}_{i}$ such that
\begin{equation}\label{e:Markov}
V_{i}=v_{i}(X_{i}),\,\ \Bar{\hat{Z}}_{i}=\hat{z}_{i}(X_{i})\,\ i=0,\dots,N.
\end{equation} For $x\in\mathbb{R}^{d}$, we introduce the flow of the forward Euler scheme:
\begin{equation}\label{e:EulerFlow}
X_{j+1}^{x,i}:=X_{j}^{x}+b(t_{j},X_{j}^{x})\Delta t_{j}+\sigma(t_{j},X_{j}^{x})\Delta W_{j},\,\ j=i,\dots,N-1,\,\ X_{i}^{x}=x.  \end{equation}
This allows us to write the functions defined in \eqref{e:Markov} as
\begin{equation}\label{e:MarkovVZ}
\begin{cases}
v_{i}(x)&=\mathbb{E}[g(X_{N}^{x})+f(t_{i},x,v_{i}(x),\hat{z}_{i}(x))\Delta t_{i}\\
&+\sum_{j=i+1}^{N-1}f(t_{j},X_{j}^{x},\hat{\mathcal{U}}_{j}(X_{j}^{x}),\sigma^{T}(t_{j},X_{j}^{x})D_{x}\hat{\mathcal{U}}_{j+1}(X_{j}^{x}))\Delta t_{j}]\\
\hat{z}_{i}(x)&=\mathbb{E}\left[\left(g(X_{N}^{x})+\sum_{j=i+1}^{N-1}f(t_{j},X_{j}^{x},\hat{\mathcal{U}}_{j}(X_{j}^{x}),\sigma^{T}(t_{j},X_{j}^{x})D_{x}\hat{\mathcal{U}}_{j+1}(X_{j}^{x}))\Delta t_{j}\right)\Delta W_{i}\right].
\end{cases}    
\end{equation}
We note that we may write:
\begin{equation}
v_{i}(x)=\Tilde{v}_{i}(x)+\Delta t_{i}f(t_{i},x,v_{i}(x),\hat{z}_{i}(x))    
\end{equation}
where 
\begin{equation}\label{e:VTildeMarkovDef}
\begin{aligned}
\Tilde{v}_{i}(x)&=\mathbb{E}\left[g(X_{N}^{x})+\sum_{j=i+1}^{N-1}f(t_{j},X_{j}^{x},\hat{\mathcal{U}}_{j}(X_{j}^{x}),\sigma^{T}(t_{j},X_{j}^{x})D_{x}\hat{\mathcal{U}}_{j+1}(X_{j}^{x}))\Delta t_{j}\right]\\
&=\mathbb{E}\left[v^{*}_{i+1}(X_{i+1}^{x})\right]
\end{aligned}
\end{equation}
and \begin{equation}\label{e:V*MarkovDef}
v^{*}_{i}(x):=\mathbb{E}\left[g(X_{N}^{x})+\sum_{j=i}^{N-1}f(t_{j},X_{j}^{x},\hat{\mathcal{U}}_{j}(X_{j}^{x}),\sigma^{T}(t_{j},X_{j}^{x})D_{x}\hat{\mathcal{U}}_{j+1}(X_{j}^{x}))\Delta t_{j}\right].    
\end{equation}

Our main result in this section is an error estimate of the DADM scheme in terms of the $L^{2}$ approximation errors of $v_{i}$ by neural networks $\mathcal{U}_{i}\in\mathcal{N}^{\rho}_{d,1,2,m}(\Theta_{m}^{\gamma}),i=0,\dots,N-1,$ which we define by 
\begin{equation*}
\varepsilon_{i}^{\mathcal{N},m}:=\inf_{\theta\in\Theta^{\gamma}_{m}}\mathbb{E}|v_{i}(X_{i})-\mathcal{U}_{i}(X_{i};\theta)|^{2}
\end{equation*}

\begin{theorem}[Error Approximation of DADM Scheme]
Under Assumption 3, there exists a constant $C>0$, (depending only on $\mu,\sigma,f,g,d,T)$ such that
\begin{equation}\label{e:ApproxErrorDADM}
\begin{aligned}
&\sup_{i\in[[0,N]]}\mathbb{E}|Y_{t_{i}}-\hat{\mathcal{U}}_{i}(X_{i}^{\pi})|^{2}+\mathbb{E}\left[\sum_{i=0}^{N-1}\int_{t_{i}}^{t_{i+1}}|Z_{s}-\sigma^{T}D_{x}\hat{\mathcal{U}}_{i+1}(X_{i}^{\pi})|^{2}_{2}\mathrm{d}s\right]\\
&\qquad \leq C\left(\mathbb{E}|g(X_{T})-g(X^{\pi}_{N})|^{2}+|\pi|+\epsilon^{Z}(\pi)+\frac{\gamma_{m}^{6}}{N}+\sum_{j=0}^{N-1}\varepsilon^{\mathcal{N},m}_{j}\right).
\end{aligned}
\end{equation}
\label{t:ErrorAnalysis}
\end{theorem}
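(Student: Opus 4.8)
The plan is to run a backward induction on $i$, comparing the trained network $\hat{\mathcal{U}}_i$ not directly with $Y_{t_i}$ but, as an intermediary, with the ``future‑networks‑plus‑one‑implicit‑Euler‑step'' process $V_i=v_i(X^{\pi}_i)$ of \eqref{e:Markov}, and splitting
\[
\mathbb{E}|Y_{t_i}-\hat{\mathcal{U}}_i(X^{\pi}_i)|^2\le 2\,\mathbb{E}|Y_{t_i}-V_i|^2+2\,\mathbb{E}|V_i-\hat{\mathcal{U}}_i(X^{\pi}_i)|^2 ,
\]
with an analogous split for the $Z$‑term, where the scheme's gradient proxy $\hat{\mathcal{Z}}_i(X^{\pi}_i)=\sigma^{T}D_x\hat{\mathcal{U}}_{i+1}(X^{\pi}_i)$ is compared first with the projection‑type gradient $\bar{\hat Z}_i$ of the $V$‑process and only then with the true $\bar Z_{t_i}$. \textbf{Step 1 (network versus ideal response).} Expanding the square in \eqref{MultistepLoss} and conditioning on $\mathcal{F}_{t_i}$, the loss $L_i(\mathcal{U}_i)$ splits as a constant independent of $\mathcal{U}_i$ (the conditional variance of the terminal term, the running sum and the Brownian increments) plus $\mathbb{E}\bigl|\tilde v_i(X^{\pi}_i)-\mathcal{U}_i(X^{\pi}_i)-\Delta t_i f(t_i,X^{\pi}_i,\mathcal{U}_i(X^{\pi}_i),\hat{\mathcal{Z}}_i(X^{\pi}_i))\bigr|^2$, since every term built from $\mathcal{U}_i(X^{\pi}_i)$ is $\mathcal{F}_{t_i}$‑measurable and hence orthogonal to the remainder, whose $\mathcal{F}_{t_i}$‑conditional mean is exactly $\tilde v_i(X^{\pi}_i)$ of \eqref{e:VTildeMarkovDef}. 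Using $v_i=\tilde v_i+\Delta t_i f(t_i,\cdot,v_i,\hat z_i)$, the Lipschitz property of $f$ (Assumption \ref{LipschitzAssump}(iii)), Young's inequality and $|\pi|$ small enough that $K|\pi|<1$, this residual is — up to factors $1\pm C|\pi|$ and a $C|\pi|^2$‑multiple of the $Z$‑errors controlled in Step 2, coming from the mismatch between the automatic‑differentiation $z$‑input of the loss and the projection $z$‑input defining $v_i$ — comparable to $\mathbb{E}|v_i(X^{\pi}_i)-\mathcal{U}_i(X^{\pi}_i)|^2$. Minimising over $\mathcal{U}_i\in\mathcal{NN}^{\rho}_{d,1,2,m}(\Theta^{\gamma}_m)$ and using that $\hat{\mathcal{U}}_i$ attains $\inf_{\mathcal{U}_i}L_i(\mathcal{U}_i)$ then gives $\mathbb{E}|v_i(X^{\pi}_i)-\hat{\mathcal{U}}_i(X^{\pi}_i)|^2\le C\varepsilon^{\mathcal N,m}_i+C|\pi|^2\cdot(\text{$Z$-errors})$.

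\textbf{Step 2 (the differentiated $Z$ --- the main obstacle).} One must control $\mathbb{E}|\sigma^{T}D_x\hat{\mathcal{U}}_{i+1}(X^{\pi}_i)-\bar{\hat Z}_i|^2$ — and hence, after Step 3, against $\bar Z_{t_i}$ — although no network was trained for $Z$. Writing $\bar{\hat Z}_i\approx\sigma^{T}D_x v_{i+1}(X^{\pi}_i)$ up to a Taylor error of order $\|D^2_x v_{i+1}\|_\infty\,\Delta t_i+\|D^3_x v_{i+1}\|_\infty\,\Delta t_i$ — obtained from the $\Delta W_i$‑representation $\bar{\hat Z}_i=\Delta t_i^{-1}\mathbb{E}_i[v^{*}_{i+1}(X^{\pi}_{i+1})\Delta W_i]+(\text{small})$ and a Taylor expansion of $v^{*}_{i+1}$ against the moments of $\Delta W_i$, where $\rho\in C^4$ together with Lemma \ref{NetworkBoundsLemma} keeps $\|D^2_x v_{i+1}\|_\infty,\|D^3_x v_{i+1}\|_\infty$ finite (of polynomial order in $\gamma_m$, since $v_{i+1}$ inherits up to fourth‑order derivatives of the future networks through $f$) — the problem reduces to bounding $\mathbb{E}|D_x\hat{\mathcal{U}}_{i+1}(X^{\pi}_i)-D_x v_{i+1}(X^{\pi}_i)|^2$. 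This is the genuinely new difficulty: one must turn the $L^2$ control $\mathbb{E}|\hat{\mathcal{U}}_{i+1}(X^{\pi}_{i+1})-v_{i+1}(X^{\pi}_{i+1})|^2\le C\varepsilon^{\mathcal N,m}_{i+1}+\dots$ from Step 1 (with $v_{i+1}$ replaced by a near‑optimal network $\mathcal{U}^{*}_{i+1}$) into control of the $L^2$‑norm of the gradient of the difference $\psi:=\hat{\mathcal{U}}_{i+1}-\mathcal{U}^{*}_{i+1}$, via a Gagliardo--Nirenberg / integration‑by‑parts inequality of the form $\|D_x\psi\|^2_{L^2(\mathcal{L}(X^{\pi}_i))}\le C\,\|\psi\|_{L^2}\bigl(\|\psi\|_{L^2}+\|D^2_x\psi\|_\infty\bigr)$ — valid for the law of the Euler scheme thanks to Assumption \ref{LipschitzAssump}(i)--(ii), which also lets one pass between $\mathcal{L}(X^{\pi}_i)$ and $\mathcal{L}(X^{\pi}_{i+1})$ — and then insert $\|D^2_x\psi\|_\infty\le C\gamma^3_m$ from Lemma \ref{NetworkBoundsLemma}. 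This produces per step a $Z$‑error of the order of $\gamma^3_m\,(\varepsilon^{\mathcal N,m}_{i+1})^{1/2}+\varepsilon^{\mathcal N,m}_{i+1}$ plus the already‑propagated $Y$‑ and $Z$‑errors; after the $\Delta t_i$‑weighted summation over $i$ and a Cauchy--Schwarz plus Young step, $\sum_i\Delta t_i\gamma^3_m(\varepsilon^{\mathcal N,m}_{i+1})^{1/2}\le C\gamma^6_m/N+C\sum_i\varepsilon^{\mathcal N,m}_i$ using $\Delta t_i=O(1/N)$, this is exactly what produces the term $\gamma^6_m/N+\sum_j\varepsilon^{\mathcal N,m}_j$ in \eqref{e:ApproxErrorDADM}; it is also what forces the weight‑constrained class \eqref{e:WeightDerivNet} and the scaling \eqref{e:GammaSeq}, and the fact that $Z$ is not trained independently is why the $Z$‑error must be carried jointly with the $Y$‑error.

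\textbf{Step 3 (propagation and discrete Grönwall).} Peeling off one step, write the true solution as $Y_{t_i}=\mathbb{E}_i[Y_{t_{i+1}}+\int_{t_i}^{t_{i+1}}f(s,X_s,Y_s,Z_s)\,\mathrm ds]$ and subtract the identity $V_i=\mathbb{E}_i[v^{*}_{i+1}(X^{\pi}_{i+1})]+\Delta t_i f(t_i,X^{\pi}_i,V_i,\bar{\hat Z}_i)$ implied by \eqref{e:VTildeMarkovDef}; the difference is driven by $g(X_T)-g(X^{\pi}_N)$, by the forward‑Euler error $\mathbb{E}|X_{t_j}-X^{\pi}_j|^2\le C|\pi|$, by the time‑ and path‑regularity of $f$ along $(X,Y,Z)$ which contributes $C(|\pi|+\epsilon^{Z}(\pi))$ (classical BSDE discretisation estimates, \cite{chassagneux2014,gobetturk2016}), by $v^{*}_{i+1}-v_{i+1}=O(|\pi|)\cdot(\text{network errors})$, and by the Lipschitz $f$‑difference involving $\mathbb{E}|\hat{\mathcal{U}}_{i+1}(X^{\pi}_{i+1})-Y_{t_{i+1}}|^2\le 2\mathbb{E}|v_{i+1}(X^{\pi}_{i+1})-\hat{\mathcal{U}}_{i+1}(X^{\pi}_{i+1})|^2+2\mathbb{E}|V_{i+1}-Y_{t_{i+1}}|^2$ and $\mathbb{E}|\hat{\mathcal{Z}}_i(X^{\pi}_i)-\bar Z_{t_i}|^2\le 2(\text{Step 2 bound})+2\mathbb{E}|\bar{\hat Z}_i-\bar Z_{t_i}|^2$. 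Using the conditional Pythagoras identity $\mathbb{E}|A|^2=\mathbb{E}|\mathbb{E}_iA|^2+\mathbb{E}|A-\mathbb{E}_iA|^2$ with $A$ the peeled‑off remainder yields on the left a ``free'' gain $c\,\Delta t_i\,\mathbb{E}|\bar{\hat Z}_i-\bar Z_{t_i}|^2$, so this last $Z$‑error is controlled against $\bar Z_{t_i}$ at no extra cost; combined with Steps 1--2 this gives for $e_i:=\mathbb{E}|Y_{t_i}-V_i|^2$ a discrete Grönwall inequality $e_i\le C\sum_{j=i+1}^{N-1}\Delta t_j\,e_j+C\,S$ with $S:=\mathbb{E}|g(X_T)-g(X^{\pi}_N)|^2+|\pi|+\epsilon^{Z}(\pi)+\gamma^6_m/N+\sum_{j=0}^{N-1}\varepsilon^{\mathcal N,m}_j$, and the discrete Grönwall lemma bounds $\sup_i e_i$ by $C\,S$. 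Re‑assembling $\int_{t_i}^{t_{i+1}}|Z_s-\hat{\mathcal{Z}}_i(X^{\pi}_i)|^2\mathrm ds\le 2\int_{t_i}^{t_{i+1}}|Z_s-\bar Z_{t_i}|^2\mathrm ds+2\Delta t_i|\bar Z_{t_i}-\hat{\mathcal{Z}}_i(X^{\pi}_i)|^2$, with the first piece summing to $\epsilon^{Z}(\pi)$ and the second handled by Step 2 and the free gain, and combining with Step 1 for the $Y$‑term, yields \eqref{e:ApproxErrorDADM}. The step I expect to be the real obstacle is Step 2: converting an $L^2$ bound on $\hat{\mathcal{U}}_{i+1}-v_{i+1}$ into an $L^2$ bound on their gradients — forced on us because $Z$ is obtained by automatic differentiation of a single network rather than by a second approximator — which is where the weight‑bounded architecture and the growth condition \eqref{e:GammaSeq} are indispensable and where the extra error $\gamma^6_m/N$ is generated.
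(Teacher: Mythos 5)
Your overall architecture is the same as the paper's: you introduce the intermediate ``future-networks-plus-one-implicit-Euler-step'' process $V_i=v_i(X^\pi_i)$, orthogonalise the loss \eqref{MultistepLoss} under $\mathbb{E}_i$ to show the minimiser is $L^2$-close to $v_i$ (this is the paper's Lemma~5.6), quote the classical time-discretisation estimate for the first error block, and close with a backward discrete Gr\"onwall argument; you also correctly identify the crux, namely controlling the automatic-differentiation gradient when no network is trained for $Z$. The gap is in your Step~2, which is where you resolve that crux. First, you Taylor-expand $v^{*}_{i+1}$ against the moments of $\Delta W_i$ and invoke $\|D^2_x v_{i+1}\|_\infty$, $\|D^3_x v_{i+1}\|_\infty<\infty$; but $v^{*}_{i+1}$ in \eqref{e:V*MarkovDef} is built by composing $f$ and $g$ with the Euler flow, $f$ is only Lipschitz and $1/2$-H\"older in time (Assumption~\ref{LipschitzAssump}(iii)) and $g$ only of linear growth, so these derivatives need not exist --- avoiding exactly this kind of regularity on $f$ is one of the paper's stated goals. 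Second, the interpolation inequality $\|D_x\psi\|^2_{L^2(\mathcal{L}(X^\pi_i))}\le C\,\|\psi\|_{L^2}\bigl(\|\psi\|_{L^2}+\|D^2_x\psi\|_\infty\bigr)$ is not available under Assumption~\ref{LipschitzAssump}: no ellipticity of $\sigma$ is assumed, the law of $X^\pi_i$ need not admit a (regular) density, and at $i=0$ it is a point mass, for which the inequality is simply false ($\psi(x_0)$ can vanish while $D_x\psi(x_0)$ is large). Even granting it, your $L^2$ control is of $\hat{\mathcal{U}}_{i+1}-v_{i+1}$, whereas the interpolation step must be run against a near-optimal network $\mathcal{U}^{*}_{i+1}$, whose gradient you have not related to $D_x v_{i+1}$ or to $\bar{\hat Z}_i$; $\varepsilon^{\mathcal{N},m}_{i+1}$ gives no such information.

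The paper sidesteps both problems with a single, more elementary observation (Lemma~\ref{ZErrorLemma}): it Taylor--It\^o expands the \emph{network} $\hat{\mathcal{U}}_{i+1}$ itself --- smooth by construction, with derivatives bounded by powers of $\gamma_m$ through Lemma~\ref{NetworkBoundsLemma} --- inside $\Delta t_i^{-1}\mathbb{E}_i[\hat{\mathcal{U}}_{i+1}(X_{i+1})\Delta W_i]$, obtaining $\sigma D_x\hat{\mathcal{U}}_{i+1}(X_i)+P(X_i)\Delta t_i$ with $|P|\le C(\gamma_m^3+\gamma_m^4)$ pointwise. The automatic derivative is thus compared directly to the projection-type $Z$ of the network, and no derivative of $v_{i+1}$, no regularity of $f$ beyond Lipschitz, and no interpolation inequality for the law of the Euler scheme are ever needed; the $\gamma_m$-powers-over-$N$ contribution in \eqref{e:ApproxErrorDADM} arises from this It\^o--Taylor remainder (terms of the form $(\gamma_m^3+\gamma_m^4)^2\Delta t_j^2$ accumulated over the grid), not from a Young-inequality split of $\gamma_m^3(\varepsilon^{\mathcal{N},m})^{1/2}$ as in your accounting. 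To repair your argument you would either have to import the paper's expansion of $\hat{\mathcal{U}}_{i+1}$ in place of your Step~2, or add ellipticity and smoothness hypotheses that the theorem does not assume.
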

\begin{remark}
 The upper bound in \eqref{e:ApproxErrorDADM} for the approximation error of the DADM scheme consists of five terms. The first three terms come from the time discretization error of the BSDE, as in \cite{bouchardBTZ2004} and \cite{gobet2005regression}, with $(i)$ the strong approximation of the terminal condition, which converges to zero as $|\pi|$ tends to zero, with rate $|\pi|$ when $g$ is Lipschitz, $(ii)$ the strong approximation of the forward Euler scheme and the $L^{2}$ regularity of $Y$, with a rate $|\pi|$ and $(iii)$ the $L^{2}$ regularity of $Z$ as defined in \eqref{e:ZRegularity}. Finally, the last two terms comprise the approximation error by the chosen class of networks. We note that the approximation error term $\sum_{j=0}^{N-1}\varepsilon^{\mathcal{N},m}_{j}$ in \eqref{e:ApproxErrorDADM} is an improvement over the one-step DBDP2 scheme from \cite{hure2019}, with order $\sum_{j=0}^{N-1}N\varepsilon_{j}^{\mathcal{N},m}$.  
\end{remark}
\begin{remark}
The proof of Lemma 5.5, relies on the fact that the terminal function $g$ also satisfies the network error bounds from Lemma \ref{NetworkBoundsLemma} but we cannot of course conclude this \textit{a priori}. One way to circumvent this is to add an extra neural network approximation $\Tilde{g}$ to our scheme, that is $\Tilde{g}\in \mathcal{NN}^{\rho}_{d,1,2,m}(\Theta^{\gamma}_{m})$ is the minimiser of the loss function 
\begin{equation*}
 L_{g}(\mathcal{U})=\mathbb{E}|\mathcal{U}(X_{N})-g(X_{N})|^{2}   
\end{equation*}
which would therefore add an extra error term $\varepsilon^{g}$ to the error approximation result given in \eqref{e:ApproxErrorDADM}. Therefore, by abuse of notation, throughout the rest of the proofs in the chapter we denote by $g$ the terminal function (or network) satisfying the same bounds as the networks in Lemma \ref{NetworkBoundsLemma}.
\end{remark}

Let us first introduce an additional intermediate process $$\hat{V}_{i}=\mathbb{E}_{i}[g(X_{N})+\sum_{j=i}^{N-1}f(t_{j},X_{j},\hat{\mathcal{U}}_{j}(X_{j}),\sigma^{T}(t_{j},X_{j})D_{x}\hat{\mathcal{U}}_{j+1}(X_{j}))\Delta t_{j}],$$
which can be seen as an approximation of the backward process using only the trained neural networks and conditional expectation.

We also introduce the implicit one step backward scheme for the BSDE:
\begin{equation}
\begin{cases}\label{e:BackEulerVBar}
\Bar{V_{i}}&=\mathbb{E}_{i}\left[\Bar{V}_{i+1}+f(t_{i},X_{i},\Bar{V}_{i},\Bar{Z}_{i})\Delta t_{i}\right]\\
\Bar{Z_{i}}&=\mathbb{E}_{i}\left[\Bar{V}_{i+1}\frac{\Delta W_{i}}{\Delta t_{i}}\right],\,\ i=0,\dots,N-1,
\end{cases}
\end{equation}
with $\Bar{V}_{N}=g(X_{N})$\\

To prove Theorem \ref{t:ErrorAnalysis}, we split our error analysis by introducing several intermediate virtual dynamics allowing us to isolate different error components.  The estimation of each error contribution is presented in separate Lemmas. 

We start by considering a term understood as the effect of using neural networks at the ``last-step'' compared with using one step of the Euler scheme, that is, we compare the processes $(V_{i})_{i}$ and $(\hat{V}_{i})_{i}$. We obtain simultaneously a bound for the error incurred by using automatic differentiation as an approximation for $\hat{Z}$. The bounds involve the time step and both the expressivity and the growth controls of our chosen class of neural networks. 

\begin{lemma}\label{ZErrorLemma} \ 

\begin{enumerate}
  \item For $\hat{\mathcal{U}}_{i+1}\in \mathcal{NN}^{\rho}_{d,1,2,m}(\Theta^{\gamma}_{m})$ the solution to the optimisation in $(13)$ we have
  \begin{equation}
    \mathbb{E}|\sigma(t_{i},x)D_{x}\hat{\mathcal{U}}_{i+1}(X_{i})-\hat{Z}_{i}|^{2}\leq \sum_{j=1}^{N-1}\left[\varepsilon^{\mathcal{N},m}_{j}+(\gamma_{m}^{3}+\gamma_{m}^{4})^{2}\Delta t_{j}^{2}\right]\Delta t_{j}  
  \end{equation}
  where $\gamma_{m}$ is a sequence given by \eqref{e:GammaSeq}.
  \\
  \item For $(\hat{V}_{i})_{i}$ and $(V_{i})_{i}$ we have that 
\begin{equation}
\sup_{i\in[[0,N]]}\mathbb{E}|\hat{V}_{i}-{V}_{i}|^{2}\leq C\sum_{i=0}^{N-1}\left(\varepsilon_{i}^{\mathcal{N},m}+(\gamma_{m}^{3}+\gamma_{m}^{4})^{2}\Delta t_{j}^{2}\right)
\end{equation}
\end{enumerate}
 \end{lemma}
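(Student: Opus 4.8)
The plan is to treat the two parts together, since part (2) is a backward-induction consequence of the one-step estimate underlying part (1). For part (1), I would start from the definitions of $\hat Z_i$ in \eqref{e:MarkovVZ} and of $\hat{\mathcal{Z}}_i = \sigma^T(t_i,\cdot)D_x\hat{\mathcal{U}}_{i+1}$, writing $\hat Z_i = \hat z_i(X_i)$ from the Markov representation \eqref{e:Markov}. The key observation is that $\hat z_i(x)$ is, up to the factor $\sigma^T$, a finite-difference-type approximation of a gradient: using the tower property and the fact that $\mathbb{E}[\Delta W_i \mid \mathcal{F}_{t_i}] = 0$, one can subtract any $\mathcal{F}_{t_i}$-measurable quantity inside the expectation, and in particular compare $\hat z_i(X_i)$ against $\sigma^T(t_i,X_i)D_x v^*_{i+1}(X_i)$ (the ``true'' gradient of the frozen value function $v^*_{i+1}$ from \eqref{e:V*MarkovDef}). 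This is precisely the standard integration-by-parts / Malliavin-type identity relating $\mathbb{E}[\Phi(X_{i+1}^x)\Delta W_i]/\Delta t_i$ to $\sigma^T D_x \mathbb{E}[\Phi(X_{i+1}^x)]$ up to an $O(\Delta t_i)$ correction, provided $\Phi$ (here built from $g$ and the networks $\hat{\mathcal{U}}_j$) has controlled derivatives. This is where Lemma \ref{NetworkBoundsLemma} and the convention (from the second Remark) that $g$ satisfies the same derivative bounds enter: each term $f(t_j,X_j^x,\hat{\mathcal{U}}_j(X_j^x),\sigma^T D_x\hat{\mathcal{U}}_{j+1}(X_j^x))\Delta t_j$ contributes, after differentiating the Euler flow in $x$ and using the Lipschitz property of $f$ (Assumption \ref{LipschitzAssump}(iii)) together with the bounds $|D_x\hat{\mathcal{U}}|\le C\gamma_m^2$, $|D_x^2\hat{\mathcal{U}}|\le C\gamma_m^3$, a factor $(\gamma_m^3+\gamma_m^4)\Delta t_j$. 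Then one replaces $D_x v^*_{i+1}$ by $D_x \hat{\mathcal{U}}_{i+1}$: this contributes $\varepsilon^{\mathcal{N},m}_{i+1}$-type terms once we know $\hat{\mathcal{U}}_{i+1}$ is close to $v_{i+1}$ in $L^2$ — but passing from an $L^2$ bound on the functions to an $L^2$ bound on their gradients is exactly the subtle point, and is handled via the weight-constrained architecture: on the bounded parameter set $\Theta_m^\gamma$, an $L^2$ closeness of $\mathcal{U}_{i+1}$ to a smooth target, combined with the uniform derivative bounds, yields control of the gradient discrepancy at the cost of the $\gamma_m$ powers already present, which is why the $\varepsilon^{\mathcal{N},m}_j$ terms appear with no extra $N$ factor. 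Summing the contributions over $j=i+1,\dots,N-1$ and bounding $\sup_j \Delta t_j \le |\pi|$ gives the stated bound with the sum $\sum_{j=1}^{N-1}[\varepsilon^{\mathcal{N},m}_j + (\gamma_m^3+\gamma_m^4)^2\Delta t_j^2]\Delta t_j$.

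For part (2), I would write $\hat V_i - V_i$ by comparing their defining conditional expectations: both involve the same sum $\sum_{j=i+1}^{N-1} f(t_j,X_j,\hat{\mathcal{U}}_j,\sigma^T D_x\hat{\mathcal{U}}_{j+1})\Delta t_j$ and the same terminal $g(X_N)$, so the difference comes only from the ``last step'' term — $V_i$ uses $f(t_i,X_i,V_i,\bar{\hat Z}_i)\Delta t_i$ (one implicit Euler step) whereas $\hat V_i$ uses $f(t_i,X_i,\hat{\mathcal{U}}_i(X_i),\sigma^T D_x\hat{\mathcal{U}}_{i+1}(X_i))\Delta t_i$ — plus the recursive dependence through $\hat V_{i+1}$ versus $V_{i+1}$. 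Taking conditional expectations, using the Lipschitz property of $f$ in $(y,z)$, Young's inequality to absorb the $\Delta t_i \mathbb{E}|\hat V_i - V_i|$ term for $|\pi|$ small, and applying $|ab|\le \tfrac12(a^2+b^2)$ gives a Gronwall-type recursion
\[
\mathbb{E}|\hat V_i - V_i|^2 \le (1+C\Delta t_i)\,\mathbb{E}|\hat V_{i+1}-V_{i+1}|^2 + C\Delta t_i\big(\mathbb{E}|\hat{\mathcal{U}}_i(X_i)-v_i(X_i)|^2 + \mathbb{E}|\sigma^T D_x\hat{\mathcal{U}}_{i+1}(X_i) - \hat Z_i|^2\big),
\]
where the first of the two source terms is $\varepsilon^{\mathcal{N},m}_i$ (using that $\hat{\mathcal{U}}_i$ is the minimiser, hence as good as the infimum defining $\varepsilon^{\mathcal{N},m}_i$) and the second is bounded by part (1). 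Iterating the recursion from $i=N$ (where $\hat V_N - V_N = 0$) down, using $\prod(1+C\Delta t_i)\le e^{CT}$ and $\sum_j \Delta t_j^2 \le |\pi| T$, collapses everything into $C\sum_{i=0}^{N-1}(\varepsilon^{\mathcal{N},m}_i + (\gamma_m^3+\gamma_m^4)^2\Delta t_j^2)$.

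I expect the main obstacle to be the gradient-comparison step inside part (1): controlling $\mathbb{E}|\sigma^T D_x\hat{\mathcal{U}}_{i+1}(X_i) - \hat Z_i|^2$ requires simultaneously (a) the integration-by-parts identity turning the $\Delta W_i$-weighted expectation into a spatial derivative with an $O(\Delta t)$ remainder whose size depends on second derivatives of the integrand, and (b) transferring the $L^2$ function-approximation error $\varepsilon^{\mathcal{N},m}_{i+1}$ into an error on derivatives — something that is false for general function classes and works here only because of the weight constraints $\Theta_m^\gamma$ and Lemma \ref{NetworkBoundsLemma}. Keeping careful track of which powers of $\gamma_m$ accumulate (so that the final bound has exactly $(\gamma_m^3+\gamma_m^4)^2$ and not a worse power, consistent with the $\gamma_m^6/N$ term in Theorem \ref{t:ErrorAnalysis}) is the delicate bookkeeping; everything else is Lipschitz estimates, the tower property, and discrete Gronwall.
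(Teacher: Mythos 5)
Your part (2) recursion and its Gronwall closure match the paper's argument, but the mechanism you propose for part (1) has a genuine gap. You integrate by parts against the frozen value function $v^{*}_{i+1}$ and then replace $D_{x}v^{*}_{i+1}$ by $D_{x}\hat{\mathcal{U}}_{i+1}$, claiming that the weight constraints $\Theta_{m}^{\gamma}$ let you convert the $L^{2}$ function error $\varepsilon^{\mathcal{N},m}_{i+1}$ into an $L^{2}$ error on gradients ``at the cost of the $\gamma_{m}$ powers already present''. Lemma \ref{NetworkBoundsLemma} does not deliver this: it gives uniform bounds on the derivatives of networks in the class, not a bound on $D_{x}(\hat{\mathcal{U}}_{i+1}-v^{*}_{i+1})$ in terms of $\|\hat{\mathcal{U}}_{i+1}-v^{*}_{i+1}\|_{L^{2}}$. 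The best generic implication is an interpolation (Landau--Kolmogorov type) inequality, which would produce a term of order $(\varepsilon^{\mathcal{N},m}_{i+1})^{1/2}\gamma_{m}^{3}$ rather than $\varepsilon^{\mathcal{N},m}_{i+1}+(\gamma_{m}^{3}+\gamma_{m}^{4})^{2}\Delta t_{i}^{2}$, and that square root ruins the rate claimed in Theorem \ref{t:ErrorAnalysis}. Moreover, differentiating $v^{*}_{i+1}$ in $x$ means differentiating $f$ composed with the Euler flow, and Assumption \ref{LipschitzAssump} only gives Lipschitz continuity of $f$ -- avoiding exactly this kind of regularity requirement on the generator is one of the paper's stated goals.

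The paper's proof sidesteps the gradient comparison altogether. It applies Taylor's theorem and It\^o's lemma directly to the \emph{network} inside $\mathbb{E}_{i}[\hat{\mathcal{U}}_{i+1}(X_{i+1})\Delta W_{i}]$, which is legitimate because $\rho\in C^{4}$ and $\theta\in\Theta_{m}^{\gamma}$, yielding the identity $\frac{1}{\Delta t_{i}}\mathbb{E}_{i}[\hat{\mathcal{U}}_{i+1}(X_{i+1})\Delta W_{i}]=\sigma(t_{i},X_{i})D_{x}\hat{\mathcal{U}}_{i+1}(X_{i})+P(X_{i})\Delta t_{i}$ with $|P|\leq C(\gamma_{m}^{3}+\gamma_{m}^{4})$ by Lemma \ref{NetworkBoundsLemma}. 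The remaining discrepancy between $\hat{Z}_{i}=\mathbb{E}_{i}[\hat{V}_{i+1}\Delta W_{i}/\Delta t_{i}]$ and $\frac{1}{\Delta t_{i}}\mathbb{E}_{i}[\hat{\mathcal{U}}_{i+1}(X_{i+1})\Delta W_{i}]$ is then controlled via Cauchy--Schwarz through the zeroth-order quantities $\mathbb{E}|V_{i+1}-\hat{\mathcal{U}}_{i+1}(X_{i+1})|^{2}$ and $\mathbb{E}|V_{i+1}-\hat{V}_{i+1}|^{2}$, so only $L^{2}$ errors on function values -- never on gradients of non-network objects -- ever appear, and these close with the same discrete Gronwall iteration you describe for part (2). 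To repair your argument, replace the $v^{*}_{i+1}$-based integration by parts with this network-level expansion; the rest of your outline then goes through.
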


\begin{proof}
 We shall apply Taylor's theorem and Ito's lemma to $\mathbb{E}_{i}[\mathcal{U}_{i+1}(X_{i+1})\Delta W_{i}]$ to deduce that there exists a $\xi\in (X_{i},X_{i+1})$ such that 
 \begin{equation}\label{e:DerivExpec}
 \begin{aligned}
  \mathbb{E}_{i}[ & \hat{\mathcal{U}}_{i+1}(X_{i+1})\Delta W_{i}]\\
  &=\mathbb{E}_{i}[\hat{\mathcal{U}}_{i+1}(X_{i})\Delta W_{i}+D_{x}\hat{\mathcal{U}}_{i+1}(X_{i})b(t_{i},X_{i})\Delta t_{i}\Delta W_{i}+\sigma(t_{i},X_{i})D_{x}\hat{\mathcal{U}}_{i+1}(X_{i})\Delta W_{i}^{2}\\
  &\qquad  + \frac{1}{2}D^{2}_{x}\hat{\mathcal{U}}_{i+1}(X_{i})b^{2}(t_{i},X_{i})\Delta t_{i}^{2}\Delta W_{i}+\frac{1}{2}D^{2}_{x}\hat{\mathcal{U}}_{i+1}(X_{i})\sigma^{2}(t_{i},X_{i})\Delta W_{i}^{3}\\
  &\qquad+D^{2}_{x}\hat{\mathcal{U}}_{i+1}(X_{i})b(t_{i},X_{i})\sigma(t_{i},X_{i})\Delta t_{i}\Delta W_{i}^{2}+\frac{1}{6}D_{x}^{3}\hat{\mathcal{U}}_{i+1}(\xi)b^{2}(t_{i},X_{i})\Delta t_{i}^{3}\Delta W_{i}\\
  &\qquad+\frac{1}{2}D_{x}^{3}\hat{\mathcal{U}}_{i+1}(\xi)\left(b^{2}(t_{i},X_{i})\sigma(t_{i},X_{i})\Delta t_{i}^{2}\Delta W_{i}^{2}+b(t_{i},X_{i})\sigma^{2}(t_{i},X_{i})\Delta t_{i}\Delta W_{i}^{3}\right)\\
  &\qquad+\frac{1}{6}D_{x}^{3}\hat{\mathcal{U}}_{i+1}(\xi)\sigma^{3}(t_{i},X_{i})\Delta W_{i}^{4}].
  \end{aligned}
 \end{equation}
 Simplifying equation \eqref{e:DerivExpec} leads to 
 \begin{equation}
 \frac{1}{\Delta t_{i}}\mathbb{E}_{i}[\hat{\mathcal{U}}_{i+1}(X_{i+1})\Delta W_{i}]=\sigma(t_{i},X_{i})D_{x}\hat{\mathcal{U}}_{i+1}(X_{i})+P(X_{i})\Delta t_{i}
\end{equation}
 where $P(X_{i})=D^{2}_{x}\hat{\mathcal{U}}_{i+1}(X_{i})b(t_{i},X_{i})\sigma(t_{i},X_{i})+\frac{1}{6}D^{3}_{x}\hat{\mathcal{U}}_{i+1}(\xi)\sigma(t_{i},X_{i})^{3}$.
\\
We have that the coefficients $\sigma$ and $b$ are Lipschitz and that the network $\hat{\mathcal{U}}_{i+1}$ satisfies Lemma 3.3 so we have that 
$$P(X_{i})\leq C(\gamma_{m}^{3}+\gamma_{m}^{4})$$ for some constant $C$.
\\
We use this to study the error term corresponding to our approximation of $Z_{t}$, that is $\Delta t_{i}\mathbb{E}|\hat{Z}_{i}-\sigma(t_{i},X_{i})D_{x}\hat{\mathcal{U}}_{i+1}(X_{i})|^{2}$.
\\

Firstly, we introduce the process $Z_{i}=\frac{1}{\Delta t_{i}}\mathbb{E}_{i}\left[V_{i+1}\Delta W_{i}\right]$ and we note that $$\hat{Z}_{i}=\mathbb{E}_{i}\left[\hat{V}_{i+1}\frac{\Delta W_{i}}{\Delta t_{i}}\right]$$

Now,
\begin{equation}
\begin{aligned}
\mathbb{E}|V_{i}-\hat{V}_{i}|^{2}&=\Delta t_{i}^{2}\mathbb{E}|f(t_{i},X^{\pi}_{i},V_{i},\hat{Z}_{i})-f(t_{i},X^{\pi}_{i},\hat{\mathcal{U}}_{i}(X^{\pi}_{i}),\sigma^{T}(t_{i},X_{i}^{\pi})D_{x}\hat{\mathcal{U}}_{i+1}(X_{i}^{\pi}))|^{2}\\
&\leq 2K\Delta t_{i}^{2}\left(\mathbb{E}|V_{i}-\hat{\mathcal{U}}_{i}(X_{i}^{\pi})|^{2}+\mathbb{E}|\hat{Z}_{i}-\sigma^{T}(t_{i},X_{i}^{\pi})D_{x}\hat{\mathcal{U}}_{i+1}(X_{i}^{\pi})|^{2}\right)\\
&\leq C\Delta t_{i}^{2}(\varepsilon_{i}+\Delta t_{i}\mathbb{E}|\hat{Z}_{i}-\sigma^{T}(t_{i},X_{i}^{\pi})D_{x}\hat{\mathcal{U}}_{i+1}(X_{i}^{\pi})|^{2}.
\end{aligned}
\end{equation}
Moreover, we have that:
\begin{equation}\label{e:ZNetError}
\begin{aligned}
\Delta t_{i}\mathbb{E}|\hat{Z}_{i}-\sigma(t_{i},X_{i})D_{x}\hat{\mathcal{U}}_{i+1}(X_{i})|^{2}&\leq C\Delta t_{i}\mathbb{E}|Z_{i}-\hat{Z}_{i}|^{2}+C\Delta t_{i}\mathbb{E}|Z_{i}-\sigma(t_{i},X_{i})D_{x}\hat{\mathcal{U}}_{i+1}(X_{i})|^{2}\\
\leq & C\Delta t_{i}\mathbb{E}\left|\frac{1}{\Delta t_{i}}\mathbb{E}_{i}[V_{i+1}\Delta W_{i}]-\frac{1}{\Delta t_{i}}\mathbb{E}_{i}[\hat{V}_{i+1}\Delta W_{i}]\right|^{2}\\
+& C\Delta t_{i}\mathbb{E}\left|\frac{1}{\Delta t_{i}}\mathbb{E}_{i}[V_{i+1}\Delta W_{i}]-\frac{1}{\Delta t_{i}}\mathbb{E}_{i}[\hat{\mathcal{U}}_{i+1}(X_{i+1})\Delta W_{i}]\right|^{2}\\
+& C\Delta t_{i}(\gamma_{m}^{3}+\gamma_{m}^{4})^{2}\Delta t_{i}^{2}
\end{aligned}
\end{equation}
Hence, applying the discrete Gronwall inequality leads to 
\begin{equation}\label{e:FinalZ}
\sup_{i=0,\dots,N-1}\mathbb{E}|V_{i}-\hat{V}_{i}|^{2}\leq C \sum_{j=1}^{N-1}\left[\varepsilon_{j}^{\mathcal{N},m}+\varepsilon_{j+1}^{\mathcal{N},m}+(\gamma_{m}^{3}+\gamma_{m}^{4})^{2}\Delta t_{j}^{2}\right]\Delta t_{j}    
\end{equation}
substituting \eqref{e:FinalZ} into \eqref{e:ZNetError} gives the desired result.
\end{proof}

The following Lemma provides a bound for the error incurred by using $\hat{\mathcal{U}}_{i}$ as an approximator for the scheme $(V_{i})_{i}$
\begin{lemma}
    For $({V}_{i})_{i}$ and $\hat{\mathcal{U}}_{i}$, we have
    \begin{equation}
 \mathbb{E}|V_{i}-\hat{\mathcal{U}}_{i}(X_{i})|^{2}+\Delta t_{i}\mathbb{E}|\hat{Z}_{i}-\sigma(t_{i},X_{i})D_{x}\hat{\mathcal{U}}_{i+1}(X_{i})|^{2}\leq C\sum_{j=1}^{N-1}\varepsilon_{j}^{\mathcal{N},m}+(\gamma_{m}^{3}+\gamma_{m}^{4})^{2}\Delta t_{j}^{2}.  
\end{equation}
\end{lemma}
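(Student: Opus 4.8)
The plan is to bound $\mathbb{E}|V_i-\hat{\mathcal{U}}_i(X_i)|^2$ by comparing $\hat{\mathcal{U}}_i$ directly with the best neural network approximation of $v_i$, using the optimality of $\hat{\mathcal{U}}_i$ as the minimiser of the loss $L_i$ in \eqref{MultistepLoss}. The key observation is that the loss $L_i(\mathcal{U}_i)$ can be rewritten, via an orthogonal-decomposition (Pythagorean) argument, as a sum of a term measuring $\mathbb{E}|v_i(X_i)-\mathcal{U}_i(X_i)|^2$ (up to an $f$-dependent distortion) and a term that does not depend on $\mathcal{U}_i$. Concretely, I would write the argument of the expectation in \eqref{MultistepLoss} as $(v_i^*(X_{i+1}^{\text{path}}) - \text{stuff}) - \mathcal{U}_i(X_i) - \Delta t_i f(t_i,X_i,\mathcal{U}_i(X_i),\sigma^T D_x\hat{\mathcal{U}}_{i+1}(X_i)) - \sigma^T D_x\hat{\mathcal{U}}_{i+1}(X_i)\Delta W_i$, and then use that conditioning on $\mathcal{F}_{t_i}$ the martingale increments orthogonal to $\mathcal{F}_{t_i}$-measurable quantities drop out; what remains is, up to cross terms controlled by $\Delta t_i$ and the Lipschitz constant of $f$, comparable to $\mathbb{E}|V_i - \mathcal{U}_i(X_i) - \Delta t_i(f(t_i,X_i,\mathcal{U}_i(X_i),\cdot)-f(t_i,X_i,V_i,\hat{Z}_i))|^2$ plus a remainder.

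The next step is to exploit the Lipschitz property of $f$ together with Young's inequality to absorb the $f$-difference term: for $\Delta t_i$ small enough one gets $\mathbb{E}|V_i-\hat{\mathcal{U}}_i(X_i)|^2 \le (1+C\Delta t_i)\,\varepsilon_i^{\mathcal{N},m} + C\Delta t_i\,\mathbb{E}|\hat{Z}_i - \sigma^T D_x\hat{\mathcal{U}}_{i+1}(X_i)|^2 + (\text{remainder terms from the rewrite})$. The remainder terms involve the difference $\hat{V}_i - V_i$ and the discretisation error in identifying $\mathbb{E}_i[\cdot \Delta W_i]/\Delta t_i$ with $\sigma^T D_x\hat{\mathcal{U}}_{i+1}$, both of which are already controlled by Lemma~\ref{ZErrorLemma}: the first is $\sup_i\mathbb{E}|\hat{V}_i-V_i|^2 \le C\sum_j(\varepsilon_j^{\mathcal{N},m}+(\gamma_m^3+\gamma_m^4)^2\Delta t_j^2)$, and the $Z$-error contributes the $(\gamma_m^3+\gamma_m^4)^2\Delta t_j^2$ pieces. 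Finally, the $\Delta t_i\mathbb{E}|\hat{Z}_i-\sigma^T D_x\hat{\mathcal{U}}_{i+1}(X_i)|^2$ term on the right is itself bounded by the first part of Lemma~\ref{ZErrorLemma}, so for $|\pi|$ small it can be moved to the left (or simply added), and both quantities end up bounded by $C\sum_{j=1}^{N-1}\varepsilon_j^{\mathcal{N},m} + (\gamma_m^3+\gamma_m^4)^2\Delta t_j^2$ as claimed. Here one uses $\sum_j \Delta t_j = T$ to convert the $\Delta t_j$-weighted sums from Lemma~\ref{ZErrorLemma} into plain sums, at the cost of the constant $C$.

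The main obstacle I anticipate is the rewriting of the loss $L_i$ into a clean ``bias plus $\mathcal{U}_i$-independent'' decomposition: because $\mathcal{U}_i$ appears inside $f$ (the scheme is implicit in $Y$), the Pythagorean identity is not exact and one must carefully track the cross terms, using the Lipschitz bound on $f$ and the smallness of $\Delta t_i$ to show they are lower-order. A secondary subtlety, flagged by the authors themselves, is that $D_x\hat{\mathcal{U}}_{i+1}$ rather than $D_x\hat{\mathcal{U}}_i$ enters the loss — since $Z$ is not trained independently — so one must be careful that the term $\sigma^T D_x\hat{\mathcal{U}}_{i+1}(X_i)\Delta W_i$ is $\mathcal{F}_{t_{i+1}}$-adapted but its coefficient is $\mathcal{F}_{t_i}$-measurable, which is exactly what makes the martingale-orthogonality step go through; this needs $\hat{\mathcal{U}}_{i+1}$ (hence $\varepsilon_{i+1}^{\mathcal{N},m}$ and its derivative bounds from Lemma~\ref{NetworkBoundsLemma}) to already be under control, which is consistent with the backward induction structure.
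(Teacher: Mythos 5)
Your proposal follows essentially the same route as the paper: decompose the loss $L_i(\mathcal{U}_i)$ into a $\mathcal{U}_i$-dependent part plus a constant via martingale orthogonality (the paper does this explicitly with the martingale representation theorem applied to $V_i$), sandwich the $\mathcal{U}_i$-dependent part between upper and lower bounds using the Lipschitz property of $f$ and Young's inequality with $\beta=O(\Delta t_i)$, invoke the optimality of $\hat{\mathcal{U}}_i$ to pass to the infimum $\varepsilon_i^{\mathcal{N},m}$, and control the residual $Z$-terms with Lemma~\ref{ZErrorLemma}. The cross-term issue you flag (the scheme being implicit in $Y$, so the Pythagorean identity is inexact) is exactly what the paper's upper/lower sandwich on $\tilde{L}$ resolves, so your plan is consistent with the paper's proof.
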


\begin{proof}
    We apply the martingale representation theorem to $V_{i}$ to determine the existence of an adapted process $\{\hat{Z}^{M}_{s}|0\leq s\leq T\}$ such that:
 
 \begin{equation}
 \begin{split}
        g(X_{N}) & + f(t_{i},X_{i},V_{i},\hat{Z}_{i})\Delta t_{i} \\
        & +\sum_{j=i+1}^{N-1}f(t_{j},X_{j},\hat{\mathcal{U}}_{j}(X_{j}),\sigma^{T}(t_{j},X_{j})D_{x}\hat{\mathcal{U}}_{j+1}(X_{j}))\Delta t_{j}=V_{i}+\int_{t_{i}}^{t_{N}}\hat{Z}^{M}_{s}dW_{s}.
 \end{split}
 \end{equation}
 We note that:
 \begin{equation}
 \hat{Z}_{i}=\frac{1}{\Delta t_{i}}\mathbb{E}_{i}\left[\int_{t_{i}}^{t_{i+1}}\hat{Z}_{s}^{M}\mathrm{d}s\right] \implies \mathbb{E}_{i}\left[\int_{t_{i}}^{t_{i+1}}(\hat{Z}_{i}-\hat{Z}^{M}_{s})ds\right]=0.
 \end{equation}
 \\
 Inserting into the loss function for our scheme gives:
 
 \begin{align*}
 L_{i}(\mathcal{U}_{i})&=\mathbb{E}|V_{i}-\mathcal{U}_{i}(X_
 {i})+\Delta t_{i}[f(t_{i},X_{i},V_{i},\hat{Z}_{i})-f(t_{i},X_{i},\mathcal{U}_{i}(X_{i}),\sigma^{T}(t_{j},X_{j})D_{x}\mathcal{U}_{i+1}(X_{i})\\
 +&\sum_{j=i+1}^{N-1}\int_{t_{j}}^{t_{j+1}}|\hat{Z}_{s}-\sigma^{T}(t_{j},X_{j})D_{x}\hat{\mathcal{U}}_{j+1}(X_{j})|dW_{s}+\int_{t_{i}}^{t_{i+1}}[\hat{Z}_{s}-\sigma^{T}(t_{i},X_{i})D_{x}\mathcal{U}_{i+1}(X_{i})]dW_{s}|^{2}\\
 &=\Tilde{L}(\mathcal{U}_{i})+\mathbb{E}\left[\sum_{j=i}^{N-1}\int_{t_{j}}^{t_{j+1}}|\hat{Z}_{s}-\hat{Z}_{j}|^{2} ds\right]+\sum_{j=i+1}^{N-1}\Delta t_{j}\mathbb{E}|\hat{Z}_{j}-\sigma^{T}(t_{j},X_{j})D_{x}\hat{\mathcal{U}}_{j+1}(X_{j})|^{2}
 \end{align*}
 where
 \begin{align*}
 \Tilde{L}(\mathcal{U}_{i})=& \mathbb{E}\Big|V_{i}  -\mathcal{U}_{i}(X_{i})+\Delta t_{i}\left[f(t_{i},X_{i},V_{i},\hat{Z}_{i})-f(t_{i},X_{i},\mathcal{U}_{i}(X_{i}),\sigma^{T}(t_{i},X_{i})D_{x}\mathcal{U}_{i+1}(X_{i})\right]\Big|^{2}\\
 & +\Delta t_{i}\mathbb{E}|\hat{Z}_{i}-\sigma^{T}(t_{i},X_{i})D_{x}\mathcal{U}_{i+1}(X_{i})|^{2}. 
 \end{align*}
 \\
 By utilising the Lipschitz continuity of $f$ we can see that:
 \begin{equation}
\Tilde{L}(\mathcal{U}_{i})\leq C\left(\mathbb{E}|V_{i}-\mathcal{U}_{i}(X_{i})|^{2}+\Delta t_{i}\mathbb{E}\left|\hat{Z}_{i}-\sigma^{T}(t_{i},X_{i})D_{x}\mathcal{U}_{i+1}(X_{i})\right|^{2}\right).     
 \end{equation}
 \\
 Furthermore, we can use Young's inequality with $\beta\in(0,1)$ to determine that:
 \begin{align*}
\Tilde{L}_{i}(\mathcal{U}_{i})&\geq (1-\beta)\mathbb{E}|V_{i}-\mathcal{U}_{i}(X_{i})|^{2}+\Delta t_{i}\mathbb{E}|\hat{Z}_{i}-\sigma^{T}(t_{i},X_{i})D_{x}\hat{\mathcal{U}}_{i+1}(X_{i})|^{2}\\
&\quad +(1-\frac{1}{\beta})|\Delta t_{i}|^{2}\mathbb{E}|f(t_{i},X_{i},\mathcal{U}_{i}(X_{i}),\sigma^{T}(t_{i},X_{i})D_{x}\mathcal{U}_{i+1}(X_{i}))-f(t_{i},X_{i},V_{i},\hat{Z}_{i})|^{2}\\
&\geq (1-\beta)\mathbb{E}|V_{i}-\mathcal{U}_{i}(X_{i})|^{2}+\Delta t_{i}\mathbb{E}|\hat{Z}_{i}-\sigma^{T}(t_{i},X_{i})D_{x}\mathcal{U}_{i+1}(X_{i})|^{2}\\
&\quad -2\frac{K^{2}}{\beta}|\Delta t_{i}|^{2}(\mathbb{E}|\mathcal{U}_{i}(X_{i})-V_{i}|^{2}+\mathbb{E}|\sigma^{T}(t_{i},X_{i})D_{x}\mathcal{U}_{i+1}(X_{i})-\hat{Z}_{i}|^{2})\\
&\geq (1-(4K^{2}+\frac{1}{2})\Delta t_{i})\mathbb{E}|V_{i}-\mathcal{U}_{i}(X_{i})|^{2}+\frac{1}{2}\Delta t_{i}\mathbb{E}|\hat{Z}_{i}-\sigma^{T}(t_{i},X_{i})D_{x}\mathcal{U}_{i+1}(X_{i})|^{2}
\end{align*}
where we have used the Lipschitz continuity of $f$ and set $\beta=4K^{2}\Delta t_{i}$. We apply the above inequalities to $\mathcal{U}=\hat{\mathcal{U}}_{i}$, which is a minimiser of $\Tilde{L}(\hat{\mathcal{U}_{i}})$ and obtain, for $\Delta t_{i}$ sufficiently small:
\begin{align*}
\mathbb{E}|V_{i}-\hat{\mathcal{U}}_{i}(X_{i})|^{2} & +\Delta t_{i}\mathbb{E}|\hat{Z}_{i}-\sigma^{T}(t_{i},X_{i})D_{x}\hat{\mathcal{U}}_{i+1}(X_{i})|^{2}\\
\leq & C\left(\mathbb{E}|V_{i}-\mathcal{U}_{i}(X_{i})|^{2}+\Delta t_{i}\mathbb{E}|\hat{Z}_{i}-\sigma^{T}(t_{i},X_{i})D_{x}\mathcal{U}_{i+1}(X_{i})|^{2}\right). 
\end{align*}
We use Lemma \ref{ZErrorLemma} and the definition of $\varepsilon_{i}^{\mathcal{N},m}$ to write
\begin{equation}\label{e:NetError}
 \mathbb{E}|V_{i}-\hat{\mathcal{U}}_{i}(X_{i})|^{2}+\Delta t_{i}\mathbb{E}|\hat{Z}_{i}-\sigma(t_{i},X_{i})D_{x}\hat{\mathcal{U}}_{i+1}(X_{i})|^{2}\leq C\sum_{j=1}^{N-1}\varepsilon_{j}^{\mathcal{N},m}+\varepsilon^{\mathcal{N},m}_{j+1}+(\gamma_{m}^{3}+\gamma_{m}^{4})^{2}\Delta t_{j}^{2}.  
\end{equation}
\end{proof}

The following Lemma illustrates a bound for the error between the classical one step scheme for the BSDE, $(\Bar{V}_{i})_i$, and our multistep scheme given by $(\hat{V}_{i})_i$.

\begin{lemma}
For $(\hat{V}_{i})_{i}$ and $(\Bar{V}_{i})_{i}$ we have
\begin{equation}\label{e:GronY}
  \sup_{i\in[[0,N]]}\mathbb{E}|\Bar{V}_{i}-\hat{V}_{i}|^{2}\leq C\sum_{i=0}^{N-1}\left(\varepsilon_{i}^{\mathcal{N},m}+(\gamma_{m}^{3}+\gamma_{m}^{4})^{2}\Delta t_{j}^{2}\right).   
 \end{equation}
 \end{lemma}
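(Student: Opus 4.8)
The plan is to exhibit $(\hat V_i)_i$ as the solution of a \emph{perturbed} one-step implicit scheme and then compare it with $(\bar V_i)_i$ through a discrete Gronwall argument. First I would observe that, by the tower property and the $\mathcal F_{t_i}$-measurability of the integrand $f(t_i,X_i,\hat{\mathcal U}_i(X_i),\sigma^{T}(t_i,X_i)D_x\hat{\mathcal U}_{i+1}(X_i))$, the process $\hat V_i$ satisfies the backward recursion
\begin{equation*}
\hat V_i=\mathbb E_i\big[\hat V_{i+1}+f(t_i,X_i,\hat{\mathcal U}_i(X_i),\sigma^{T}(t_i,X_i)D_x\hat{\mathcal U}_{i+1}(X_i))\Delta t_i\big],\qquad \hat V_N=g(X_N),
\end{equation*}
together with $\hat Z_i=\tfrac1{\Delta t_i}\mathbb E_i[\hat V_{i+1}\Delta W_i]$. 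Subtracting this from \eqref{e:BackEulerVBar} and writing $\delta V_i:=\bar V_i-\hat V_i$, $\delta Z_i:=\bar Z_i-\hat Z_i=\tfrac1{\Delta t_i}\mathbb E_i[\delta V_{i+1}\Delta W_i]$, one obtains $\delta V_N=0$ and
\begin{equation*}
\delta V_i=\mathbb E_i[\delta V_{i+1}]+\Delta t_i\,(\Delta f_i+\rho_i),
\end{equation*}
where $\Delta f_i:=f(t_i,X_i,\bar V_i,\bar Z_i)-f(t_i,X_i,\hat V_i,\hat Z_i)$ and $\rho_i:=f(t_i,X_i,\hat V_i,\hat Z_i)-f(t_i,X_i,\hat{\mathcal U}_i(X_i),\sigma^{T}(t_i,X_i)D_x\hat{\mathcal U}_{i+1}(X_i))$. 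By Assumption \ref{LipschitzAssump}(iii), $|\Delta f_i|\le K(|\delta V_i|+|\delta Z_i|)$ while $|\rho_i|\le K(|\hat V_i-\hat{\mathcal U}_i(X_i)|+|\hat Z_i-\sigma^{T}(t_i,X_i)D_x\hat{\mathcal U}_{i+1}(X_i)|)$, so $\rho_i$ is a genuine source term already controlled by Lemma \ref{ZErrorLemma} and the preceding lemma.

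Next I would run the standard discrete stability estimate. Squaring the recursion for $\delta V_i$, taking conditional and then total expectations, and applying Young's inequality with a parameter proportional to $\Delta t_i$, the implicit evaluation $f(t_i,X_i,\bar V_i,\bar Z_i)$ produces a term of order $\Delta t_i\,\mathbb E|\delta V_i|^2$ which is absorbed into the left-hand side once $|\pi|$ is small enough. The delicate point is the $\delta Z_i$ contribution: the crude bound $\mathbb E|\delta Z_i|^2\le\tfrac{d}{\Delta t_i}\mathbb E|\delta V_{i+1}|^2$ yields a multiplicative constant strictly larger than $1$, which blows up after $N$ steps, so instead I would combine the orthogonal decomposition $\mathbb E|\delta V_{i+1}|^2=\mathbb E|\mathbb E_i[\delta V_{i+1}]|^2+\mathbb E|\delta V_{i+1}-\mathbb E_i[\delta V_{i+1}]|^2$ with the Cauchy--Schwarz estimate $\Delta t_i\,\mathbb E|\delta Z_i|^2\le d\,\mathbb E|\delta V_{i+1}-\mathbb E_i[\delta V_{i+1}]|^2$; choosing the Young parameter large enough relative to the Lipschitz constant, the $\delta Z_i$ term is absorbed by the negative deficit $-\mathbb E|\delta V_{i+1}-\mathbb E_i[\delta V_{i+1}]|^2$, which leaves
\begin{equation*}
\mathbb E|\delta V_i|^2\le(1+C\Delta t_i)\,\mathbb E|\delta V_{i+1}|^2+C\Delta t_i\,\mathbb E|\rho_i|^2 .
\end{equation*}
The discrete Gronwall inequality together with $\delta V_N=0$ then gives $\sup_{i\in[[0,N]]}\mathbb E|\delta V_i|^2\le C\sum_{j=0}^{N-1}\Delta t_j\,\mathbb E|\rho_j|^2$.

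Finally I would bound the source term using the previous lemmas. From $|\hat V_j-\hat{\mathcal U}_j(X_j)|\le|\hat V_j-V_j|+|V_j-\hat{\mathcal U}_j(X_j)|$, Lemma \ref{ZErrorLemma}(2) and the preceding lemma give $\mathbb E|\hat V_j-\hat{\mathcal U}_j(X_j)|^2\le C\sum_k\big(\varepsilon_k^{\mathcal N,m}+(\gamma_m^3+\gamma_m^4)^2\Delta t_k^2\big)$ uniformly in $j$, while Lemma \ref{ZErrorLemma}(1) gives $\mathbb E|\hat Z_j-\sigma^{T}(t_j,X_j)D_x\hat{\mathcal U}_{j+1}(X_j)|^2\le|\pi|\sum_k\big(\varepsilon_k^{\mathcal N,m}+(\gamma_m^3+\gamma_m^4)^2\Delta t_k^2\big)$. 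Hence $\mathbb E|\rho_j|^2\le C\sum_k(\varepsilon_k^{\mathcal N,m}+(\gamma_m^3+\gamma_m^4)^2\Delta t_k^2)$ uniformly in $j$, and since $\sum_{j=0}^{N-1}\Delta t_j=T$ the weighted sum $\sum_j\Delta t_j\,\mathbb E|\rho_j|^2$ collapses to a single sum, yielding \eqref{e:GronY}. The main obstacle is exactly this bookkeeping in the Gronwall step: one has to extract a coefficient $1+O(\Delta t_i)$ rather than $1+O(1)$ from the $\delta Z_i$ term (hence the orthogonal decomposition), and one must use the \emph{sharp} $Z$-estimate of Lemma \ref{ZErrorLemma}(1) — which already carries a factor $\Delta t_j$ inside the sum — rather than the coarser bound in \eqref{e:NetError}, so that summing over the $N$ time points does not reintroduce a spurious factor $N$ in front of the network approximation error.
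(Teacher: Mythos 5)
Your argument is correct and follows essentially the same route as the paper's proof: the same perturbed one-step recursion for $\hat V_i$ obtained via the tower property, Young's inequality with a parameter of order $\Delta t_i$, the Cauchy--Schwarz/orthogonality decomposition to absorb the $\bar Z_i-\hat Z_i$ term, the discrete Gronwall lemma, and the preceding lemmas to control the source terms. Your explicit splitting of the generator difference into a stability part $\Delta f_i$ and a source $\rho_i$, and your remark on using the sharp $Z$-estimate of Lemma \ref{ZErrorLemma}(1) to avoid a spurious factor of $N$, only make the paper's bookkeeping more transparent.
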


 \begin{proof}
     We use the recursive definitions of $(\Bar{V}_{i})_{i}$ and $(\hat{V}_{i})_{i}$ in conjunction with Young's inequality, the Lipschitz continuity of $f$ and the Cauchy-Schwarz inequality to write
\begin{equation}\label{e:VDiff}
\begin{aligned}
\mathbb{E}|\Bar{V}_{i}-\hat{V}_{i}|^{2}&\leq (1+\beta)\mathbb{E}\left|\mathbb{E}_{i}[\Bar{V}_{i+1}-\hat{V}_{i+1}]\right|^{2}\\
&+2K^{2}\left(1+\frac{1}{\beta}\right)|\Delta t_{i}|^{2}\left(\mathbb{E}|\Bar{V}_{i}-\hat{\mathcal{U}}_{i}(X^{\pi}_{i})|^{2}+\mathbb{E}|\Bar{Z}_{i}-\sigma^{T}(t_{i},X^{\pi}_{i})D_{x}\hat{\mathcal{U}}_{i+1}|^{2}\right)\\
&\leq (1+\beta)\mathbb{E}\left|\mathbb{E}_{i}[\Bar{V}_{i+1}-\hat{V}_{i+1}]\right|^{2}+2K^{2}(1+\frac{1}{\beta})|\Delta t_{i}|^{2}(3\mathbb{E}|\Bar{V}_{i}-\hat{V}_{i}|^{2}+2\mathbb{E}|\Bar{Z}_{i}-\hat{Z}_{i}|^{2})\\
&+2K^{2}(1+\frac{1}{\beta})|\Delta t_{i}|^{2}\left(3\mathbb{E}|\hat{V}_{i}-V_{i}|^{2}+3\mathbb{E}|V_{i}-\hat{\mathcal{U}}_{i}|^{2}+2\mathbb{E}|\hat{Z}_{i}-\sigma^{T}D_{x}\hat{\mathcal{U}}_{i+1}|^{2}\right)\\
&\leq (1+\beta)\mathbb{E}\left|\mathbb{E}_{i}[\Bar{V}_{i+1}-\hat{V}_{i+1}]\right|^{2}+(1+\beta)\frac{2K^{2}|\Delta t_{i}|^{2}}{\beta}(3\mathbb{E}|\Bar{V}_{i}-\hat{V}_{i}|^{2}+2\mathbb{E}|\Bar{Z}_{i}-\hat{Z}_{i}|^{2})\\
&+CK^{2}(1+\frac{1}{\beta})\Delta t_{i}C\sum_{j=1}^{N-1}\varepsilon_{j}^{\mathcal{N},m}+\varepsilon^{\mathcal{N},m}_{j+1}+(\gamma_{m}^{3}+\gamma_{m}^{4})^{2}\Delta t_{j}^{2},
\end{aligned}    
\end{equation}
where we use \eqref{e:ZNetError} and \eqref{e:NetError} in the last inequality. Then, by \eqref{e:BackEulerVBar} and \eqref{e:VZ} we have
\begin{equation*}
\begin{aligned}
    \Delta t_{i}(Z_{i}-\hat{Z}_{i})&=\mathbb{E}_{i}\left[\Delta W_{i}(\Bar{V}_{i+1}-\hat{V}_{i+1})\right]\\
    &=\mathbb{E}_{i}\left[\Delta W_{i}\left(\Bar{V}_{i+1}-\hat{V}_{i+1}-\mathbb{E}_{i}[\Bar{V}_{i+1}-\hat{V}_{i+1}]\right)\right],
\end{aligned}
\end{equation*}
and by Cauchy-Schwarz inequality
\begin{equation}\label{e:CSZ}
\Delta t_{i}\mathbb{E}|Z_{i}-\hat{Z}_{i}|^{2}_{2}\leq d\left(\mathbb{E}|\Bar{V}_{i+1}-\hat{V}_{i+1}|^{2}-\mathbb{E}\left|\mathbb{E}_{i}[\Bar{V}_{i+1}-\hat{V}_{i+1}]\right|^{2}\right).    
\end{equation}
We insert \eqref{e:CSZ} into \eqref{e:VDiff} and set $\beta=4dK^{2}\Delta t_{i}$ to obtain
\begin{equation*}
\begin{split}
    (1-C\Delta t_{i})\mathbb{E}|\Bar{V}_{i}-\hat{V}_{i}|^{2}\leq (1 & +C\Delta t_{i})\mathbb{E}|\Bar{V}_{i+1}-\hat{V}_{i+1}|^{2} \\
    & +(1+C\Delta t_{i})\sum_{j=1}^{N-1}\left(\varepsilon_{j}^{\mathcal{N},m}+\varepsilon^{\mathcal{N},m}_{j+1}+(\gamma_{m}^{3}+\gamma_{m}^{4})^{2}\Delta t_{j}^{2}\right).
\end{split}
  \end{equation*}
 We apply the discrete Gronwall lemma and recall that $\Bar{V}_{N}=\hat{V}_{N}=g(X_{N})$ to obtain
 \begin{equation}\label{e:GronY}
  \sup_{i\in[[0,N]]}\mathbb{E}|\Bar{V}_{i}-\hat{V}_{i}|^{2}\leq C\sum_{i=0}^{N-1}\left(\varepsilon_{i}^{\mathcal{N},m}+\varepsilon^{\mathcal{N},m}_{i+1}+(\gamma_{m}^{3}+\gamma_{m}^{4})^{2}\Delta t_{j}^{2}\right).   
 \end{equation}
 \end{proof}

We now begin the proof of Theorem \ref{t:ErrorAnalysis}:

\begin{proof}
We recall from \cite{gobet2005regression} the time discretization error
\begin{equation}\label{e:TimeDisc}
 \sup_{i\in[[0,N]]}\mathbb{E}|Y_{t_{i}}-\Bar{V}_{i}|^{2}+\mathbb{E}\left[\sum_{i=0}^{N-1}\int_{t_{i}}^{t_{i+1}}|Z_{s}-\Bar{Z}_{i}|^{2}_{2}\mathrm{d}s\right]\leq C\left(\mathbb{E}|g(X_{T})-g(X^{\pi}_{N})|^{2}+|\pi|+\epsilon^{Z}(\pi)\right)   \end{equation}
 for some constant $C$ depending only on the coefficients satisfying Assumption \ref{LipschitzAssump}.

We also recall the intermediate process $$\hat{V}_{i}=\mathbb{E}_{i}[g(X_{N})+\sum_{j=i}^{N-1}f(t_{j},X_{j},\hat{\mathcal{U}}_{j}(X_{j}),\sigma^{T}(t_{j},X_{j})D_{x}\hat{\mathcal{U}}_{j+1}(X_{j}))\Delta t_{j}].$$

Using the tower property of conditional expectation allows us to write:
\begin{equation}
    \hat{V}_{i}=\mathbb{E}_{i}\left[\hat{V}_{i+1}+f(t_{i},X_{i},\hat{\mathcal{U}}_{i}(X_{i}),\sigma^{T}(t_{i},X_{i})D_{x}\hat{\mathcal{U}}_{i+1}(X_{i}))\Delta t_{i}\right].
    \end{equation}

We also remark that:
\begin{equation}\label{e:VZ}
\hat{Z_{i}}=\mathbb{E}_{i}\left[\hat{V}_{i+1}\frac{\Delta W_{i}}{\Delta t_{i}}\right].
\end{equation}
We decompose the $Y$ error as:
\begin{equation}\label{e:YDecomp}
 \mathbb{E}_{i}|Y_{t_{i}}-\hat{\mathcal{U}}_{i}|^{2}\leq 4(\mathbb{E}|Y_{t_{i}}-\Bar{V}_{i}|^{2}+\mathbb{E}|\Bar{V}_{i}-\hat{V}_{i}|^{2}+\mathbb{E}|\hat{V}_{i}-V_{i}|^{2}+\mathbb{E}|V_{i}-\hat{\mathcal{U}}_{i}(X_{i})|^{2}):=4(T^{1}_{i}+T^{2}_{i}+T^{3}_{i}+T^{4}_{i}).
 \end{equation}
 The first term, $T_{i}^{1}$, is the time discretization error that has been well studied by previous authors and so we dedicate our analysis to the remaining three terms.

Applying Lemma 5.6, Lemma 5.7 and using \eqref{e:TimeDisc} immediately leads to the required result for the $Y$ error.
 \\
 
 Finally, we decompose the approximation error for the $Z$ component
 \begin{equation}\label{e:ZDecomp}
 \begin{aligned}
 &\mathbb{E}\left[\sum_{i=0}^{N-1}\int_{t_{i}}^{t_{i+1}}|Z_{s}-\sigma^{T}(t_{i},X^{\pi}_{i})D_{x}\hat{\mathcal{U}}_{i+1}(X_{i}^{\pi})|^{2}_{2}\mathrm{d}s\right]\\
 &\leq 3\sum_{i=0}^{N-1}\left(\mathbb{E}\left[\int_{t_{i}}^{t_{i+1}}|Z_{s}-\Bar{Z}_{i}|^{2}_{2}\mathrm{d}s\right]+\Delta t_{i}\mathbb{E}|\Bar{Z}_{i}-\hat{Z}_{i}|^{2}_{2}+\Delta t_{i}\mathbb{E}|\hat{Z}_{i}-\sigma^{T}(t_{i},X_{i}^{\pi})D_{x}\hat{\mathcal{U}}_{i+1}(X_{i}^{\pi})|^{2}_{2}\right).
 \end{aligned}
 \end{equation}
 We sum the inequality \eqref{e:CSZ} and use \eqref{e:VDiff} in order to obtain 
 \begin{equation}\label{e:ZHatDiff}
 \begin{aligned}
 &\sum_{i=0}^{N-1}\Delta t_{i}\mathbb{E}|\Bar{Z}_{i}-\hat{Z}_{i}|^{2}\\
 &\leq d\sum_{i=0}^{N-1}\left(\mathbb{E}|\Bar{V}_{i}-\hat{V}_{i}|^{2}-\mathbb{E}|\mathbb{E}_{i}\left[\Bar{V}_{i+1}-\hat{V}_{i+1}\right]|^{2}\right)\\
 &\leq d\sum_{i=0}^{N-1}\left(\beta \mathbb{E}|\mathbb{E}_{i}\left[\Bar{V}_{i+1}-\hat{V}_{i+1}\right]|^{2}+(1+\frac{1}{\beta})(2K^{2}|\Delta t_{i}|^{2})(3\mathbb{E}|\Bar{V}_{i}-\hat{V}_{i}|^{2}+2\mathbb{E}|\Bar{Z}_{i}-\hat{Z}_{i}|^{2} \right.\\
 &\qquad +CK^{2}(1+\frac{1}{\beta})\Delta t_{i}(\varepsilon_{i}^{\mathcal{N},m}+\varepsilon^{\mathcal{N},m}_{i+1}+(\gamma_{m}^{3}+\gamma_{m}^{4})^{2}\Delta t_{j}^{2}))\\
 &\leq  \frac{1}{2}\sum_{i=0}^{N-1}\Delta t_{i}\mathbb{E}|\Bar{Z}_{i}-\hat{Z}_{i}|^{2}_{2} + d\sum_{i=0}^{N-1}\Big(\frac{8dK^{2}\Delta t_{i}}{1-8dK^{2}\Delta t_{i}}\mathbb{E}|\mathbb{E}_{i}[\Bar{V}_{i+1}-\hat{V}_{i+1}]|^{2}+\frac{3}{4d}\Delta t_{i}\mathbb{E}|\Bar{V}_{i}-\hat{V}_{i}|^{2}\\
 & \qquad\qquad\qquad\qquad\qquad\qquad\qquad+ \frac{C}{8d}(\varepsilon_{i}^{\mathcal{N},m}+\varepsilon^{\mathcal{N},m}_{i+1}+(\gamma_{m}^{3}+\gamma_{m}^{4})^{2}\Delta t_{j}^{2})\Big) ,
 \end{aligned}
 \end{equation}
 where we have chosen $\beta=\frac{8dK^{2}\Delta t_{i}}{1-8dK^{2}\Delta t_{i}}=O(\Delta t_{i})$ for $\Delta t_{i}$ small enough and we have applied Lemma 5.5, Lemma 5.6 and Lemma 5.7. 
 Substituting \eqref{e:TimeDisc}, and \eqref{e:ZHatDiff} into \eqref{e:ZDecomp} proves the required bound for the approximation error of $Z$ and thus completes the proof.
 \end{proof}

\end{subsection}
\end{section}

\begin{section}{Numerical Examples}
In the following numerical implementation section we shall use networks with a single hidden layer $(L=1)$ and a ReLU activation function. The learning rate ($\alpha$) will be initialised at $\alpha=0.01$ and we implement a learning rate scheduler with loss tolerance of $0.01$ before a halving of the learning rate. We use $5000$ iterations of stochastic gradient descent with batch size $M=1000$.
\begin{subsection}{Bounded example}
We consider the following decoupled FBSDE
\begin{equation}\label{e:Bounded}
\begin{cases}
dX_{t}&=\mu dt +\sigma dW_{t},\,\ X_{0}=x_{0},\\
-dY_{t}&=((\cos(\Bar{X})+0.2\sin(\Bar{X}))\exp({\frac{T-t}{2}}
)\\
&-\frac{1}{2}\left(\sin(\Bar{X})\cos(\Bar{X})\exp{(T-t)}\right)^{2}+\frac{1}{2}(Y_{t}\Bar{Z})^{2})dt-Z_{t}dW_{t}, \\
Y_{T}&=\cos(\Bar{X})
\end{cases}
\end{equation}
where $\Bar{X}=\sum_{i=1}^{d}X^{i}_{t}$ and $\Bar{Z}=\sum_{i=1}^{d}Z^{i}_{t}$.
\\

By applying the nonlinear Feynman-Kac formula we determine that $Y_{t}=u(t,X_{t})$ where $u$ is the solution to the PDE \eqref{e:PDE} with
$$f(t,x,y,z)=-\left(\cos(\Bar{x})+0.2\sin(\Bar{x}))e^{\frac{T-t}{2}}\right)+\frac{1}{2}\left(\sin(\Bar{x})\cos(\Bar{x})\exp{(T-t)}\right)^{2}-\frac{1}{2}(yz)^{2}.$$
The true solution is given by 
\begin{equation}
u(x,t)=\exp\left(\frac{T-t}{2}\right)\cos(\Bar{x}).
\end{equation}

We take $d=1$, $T=2, \mu=0.2, \sigma=1$ and $x_{0}=1$. We note that for this value of the maturity $T$ and number of time steps, $N=180$, the Deep BSDE scheme does not converge. The following plots below are for the solution at given time steps
\\

\begin{table}[H]
    \centering
\begin{tabular}{ |p{3cm}||p{3cm}|p{3cm}|p{3cm}|  }
 \hline
 \multicolumn{4}{|c|}{Scheme errors} \\
 \hline
 & Averaged value & Standard deviation &Relative error (\%)\\
 \hline
  (DBDP2)   & 1.4364    & \textbf{0.0140} &   2.19\\
  (DBSDE)&   NotConv& NotConv   &NotConv\\
 (DADM) &1.4594 & 0.022&  \textbf{0.63}\\
 
 \hline
\end{tabular}
\caption{Estimate of $u(0,x_{0})$ in the case \eqref{e:Bounded}, where $d=1,\,\ x_{0}=1,\,\ T=2$ with $N=180$ time steps. The average and standard deviation are computed over 5 independent runs of the schemes.}
\label{Figure 1}
\end{table}

\begin{figure}[H]
\includegraphics[scale=0.8]{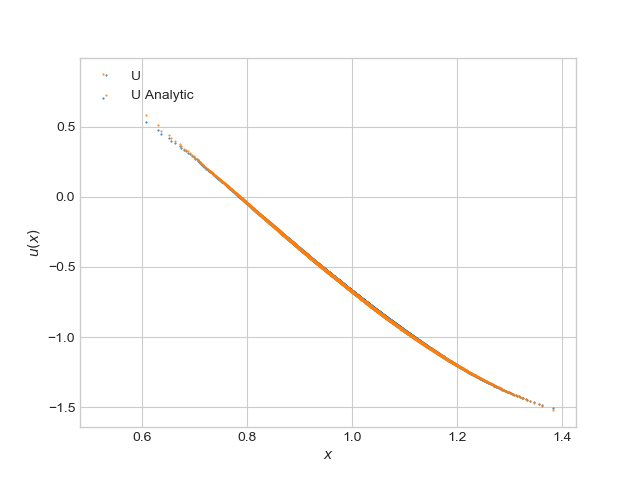}
\caption{Estimate of $u$ using DADM Scheme in case \eqref{e:Bounded} with $T=2$ and $N=180$ at $t=0.0006$.}
\label{Figure 2}
\end{figure}

We now increase the dimension of the state variable and set the maturity to $T=1$.

\begin{table}[H]
\centering
\begin{tabular}{ |p{3cm}||p{3cm}|p{3cm}|p{3cm}|  }
 \hline
 \multicolumn{4}{|c|}{Scheme errors} \\
 \hline
 & Averaged value & Standard deviation &Relative error (\%)\\
 \hline
  (DBDP2)   & -1.3941    & \textbf{0.0010} &   0.77\\
  (DBSDE)&   -1.3891& 0.0021   &0.41\\
 (DADM) &-1.3899 & 0.0012&  \textbf{0.47}\\
 \hline
\end{tabular}
\caption{Estimate of $u(0,x_{0})$ in the case \eqref{e:Bounded}, where $d=10, x_{0}=1_{10}, T=1$ with $N=60$ time steps.The average and standard deviation are computed over 5 independent runs of the schemes.}
\label{Figure 3}
\end{table}

\begin{table}[H]
    \centering
\begin{tabular}{ |p{3cm}||p{3cm}|p{3cm}|p{3cm}|  }
 \hline
 \multicolumn{4}{|c|}{Scheme errors} \\
 \hline
 & Averaged value & Standard deviation &Relative error (\%)\\
 \hline
  (DBDP2)   & 0.6698    &0.0062 &   0.45\\
  (DBSDE)&   0.6805& 0.0021   &1.14\\
 (DADM) &0.6712 & \textbf{0.0010}& \textbf{0.27}  \\
 \hline
\end{tabular}

\caption{Estimate of $u(0,x_{0})$ in the case \eqref{e:Bounded}, where $d=20, x_{0}=1, T=1$ with $N=60$ time steps. The average and standard deviation are computed over 5 independent runs of the schemes.}

\label{Figure 4}
\end{table}
\end{subsection}

\begin{subsection}{Unbounded example}
We consider the parameters 
\\
\begin{equation}\label{e:Unbounded}
\begin{cases}
\mu=0,\sigma=\frac{I_{d}}{\sqrt{d}}\\
f(x,y,z)=k(x)+\frac{y}{\sqrt{d}}(1_{d}.z)+\frac{y^{2}}{2}
\end{cases} 
\end{equation}
such that the solution of the PDE is given by 

$$u(t,x)=\frac{T-t}{d}\sum_{i=1}^{d}\left(\sin(x_{i})1_{x_{i}<0}+x_{i}1_{x_{i}>0}\right)+\cos\left(\sum_{i=1}^{d}ix_{i}\right).$$
We shall begin with tests in dimension $d=1$ and we compare the Deep BSDE scheme, DBDP2 scheme and our DADM scheme. The results are listed in Figure 5 and we show the plots in dimension 1 for the DADM Scheme below
\\

\begin{table}[H]
\begin{tabular}{ |p{3cm}||p{3cm}|p{3cm}|p{3cm}|  }
 \hline
 \multicolumn{4}{|c|}{Scheme errors} \\
 \hline
 & Averaged value & Standard deviation &Relative error (\%)\\
 \hline
  (DBDP2)   & 1.3719    & \textbf{0.0040} &   0.41\\
  (DBSDE)&   1.3708  & 0.0031   &0.49\\
 (DADM) &1.3735 & 0.0041&  \textbf{0.29}\\
 
 \hline
\end{tabular}
\label{Figure 5}
\caption{Estimates of $u(0,x_{0})$ for \eqref{e:Unbounded},with $d=1,x_{0}=0.5$ with 60 time steps. The average and standard deviation are computed over 5 independent runs of the algorithm. The theoretical solution is 1.3776.}
\end{table}

\begin{figure}[H]
\includegraphics[scale=0.8]{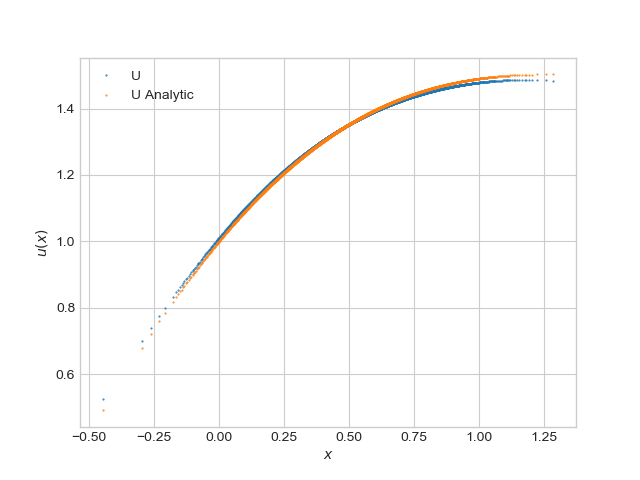}
\label{Figure 6}
\caption{Estimate of $u$ using DADM Scheme in the case \eqref{e:Unbounded}. We take $d=1,x_{0}=0.5$ and $t=0.01667$.}
\end{figure}

We now increase the dimension to $d=8$ in order to show the merits of our method compared to classical methods such as finite differences that struggle to converge above dimension $d=4$. We note that the accuracy of the scheme is not as good as in the previous example, but we notice that the DADM scheme provides improved performance compared to the DBDP2 scheme and DBSDE scheme, which does not converge.

\begin{table}[H]
    \centering    
\begin{tabular}{ |p{3cm}||p{3cm}|p{3cm}|p{3cm}|  }
 \hline
 \multicolumn{4}{|c|}{Scheme errors} \\
 \hline
 & Averaged value & Standard deviation &Relative error\\
 \hline
  (DBDP2)   & 1.0327    & \textbf{0.0081} &   11.02\\
  (DBSDE)&   NotConv  & NotConv   &NotConv\\
 (DADM) &1.0675 & 0.01241&  \textbf{8.01}\\
 
 \hline
\end{tabular}
\label{Figure 7}
\caption{Estimates of $u(0,x_{0})$ for \eqref{e:Unbounded},with $d=8,x_{0}=(0.5)\mathbf{1}_{8}$ with 60 time steps. The average and standard deviation are computed over 5 independent runs of the algorithm. The theoretical solution is 1.1603.}
\end{table}

\end{subsection}

\begin{subsection}{Remarks on the numerical results}
We can see that,from our example $(6.1)$
,when the maturity is large our the DADM scheme outperforms the benchmark DBSDE as that method fails to converge due to the high number of times steps in the discretisation, We also note that our scheme outperforms its one step counterpart the DBDP2 scheme in terms of smaller relative error. Reducing the maturity and increasing the dimension from $d=1$ to $d=10$ and $d=20$ maintains the winning performance. For our unbounded example $(6.2)$, the DADM scheme attains the highest accuracy in tests on both $d=1$ and $d=8$. However, we note that all methods perform poorly when the dimension is increased beyond $d=10$.
\end{subsection}

\

\end{section}

\printbibliography

\end{document}